\newcommand{\parens}[1]{\left(#1\right)}
\def\sqr#1#2{{\vcenter{\vbox{\hrule height.#2pt
				\hbox{\vrule width.#2pt height#1pt \kern#1pt \vrule width.#2pt}
				\hrule height.#2pt}}}}
\def\5n{\negthinspace \negthinspace \negthinspace \negthinspace \negthinspace }
\def\4n{\negthinspace \negthinspace \negthinspace \negthinspace }
\def\3n{\negthinspace \negthinspace \negthinspace }
\def\2n{\negthinspace \negthinspace }
\def\1n{\negthinspace }
\def\EE{\mathsf E\:\!}
\def\PP{\mathsf P}
\def\dbF{\mathbb{F}}
\def\dbG{\mathbb{G}}
\def\dbN{\mathbb{N}}
\def\dbR{\mathbb{R}}
\def\sC{\mathscr{C}}
\def\sW{\mathscr{W}}
\def\cF{{\cal F}}
\def\cG{{\cal G}}
\def\cT{{\cal T}}
\def\ds{\displaystyle}
\def\ns{\noalign{\ss}}
\def\ss{\smallskip}
\def\q{\quad}
\def\qq{\qquad}
\def\({\Big (}
\def\){\Big )}
\def\[{\Big[}
\def\]{\Big]}
\def\a{\alpha}
\def\g{\gamma}
\def\d{\delta}
\def\e{\varepsilon}
\def\l{\lambda}
\def\si{\sigma}
\def\t{\tau}
\def\th{\theta}
\def\L{\Lambda}
\def\para#1{\vskip .25\baselineskip\noindent{\bf #1}}
\def\wt{\widetilde}
\def\bde{\begin{definition}\label}
	\def\ede{\end{definition}}
\def\be{\begin{equation}}
\def\bel{\begin{equation}\label}
\def\ee{\end{equation}}
\def\bt{\begin{theorem}\label}
	\def\et{\end{theorem}}
\def\bc{\begin{corollary}\label}
	\def\ec{\end{corollary}}
\def\bl{\begin{lemma}\label}
	\def\el{\end{lemma}}
\def\bp{\begin{proposition}\label}
	\def\ep{\end{proposition}}
\def\bas{\begin{assumption}\label}
	\def\eas{\end{assumption}}
\def\br{\begin{remark}\label}
	\def\er{\end{remark}}
\def\bex{\begin{example}\label}
	\def\ex{\end{example}}
\def\ba{\begin{array}}
	\def\ea{\end{array}}
\def\ben{\begin{enumerate}}
	\def\een{\end{enumerate}}
\def\square#1{\vbox{\hrule\hbox{\vrule height#1%
			\kern#1\vrule}\hrule}}
\def\rectangle#1#2{\vbox{\hrule\hbox{\vrule height#1%
			\kern#2\vrule}\hrule}}
\font\tenbb=msbm10 \font\sevenbb=msbm7 \font\fivebb=msbm5
\newtheorem{theorem}{\indent Theorem}[section]
\newtheorem{definition}[theorem]{\indent Definition}
\newtheorem{proposition}[theorem]{\indent Proposition}
\newtheorem{corollary}[theorem]{\indent Corollary}
\newtheorem{lemma}[theorem]{\indent Lemma}
\newtheorem{remark}[theorem]{\indent Remark}
\newtheorem{example}[theorem]{\indent Example}
\newtheorem{assumption}[theorem]{\indent Assumption}
\def\bea{\begin{equation*}}
\def\eea{\end{equation*}}
\def\bel{\begin{equation}\label}
\def\eel{\end{equation}}
\def\ba{\begin{array}}
	\def\ea{\end{array}}
\newcommand{\ad}{&\!\!\!\displaystyle}
\def\({\Big (}
\def\){\Big )}
\def\[{\Big[}
\def\]{\Big]}
\def\q{\quad}
\def\qq{\qquad}
\def\d{\delta}
\def\e{\varepsilon}
\def\ds{\displaystyle}
\def\ns{\noalign{\smallskip}}
\def \hat {\widehat}
\def\LL{I\!\!L}
\begin{document}
	\title{\bf Exact optimal stopping  for multidimensional linear switching diffusions}
	\author{ Philip Ernst\thanks{ Department of Statistics, Rice University, Houston, 77005.   Email: {\tt philip.ernst@rice.edu} }~~~ and~~~ Hongwei Mei\thanks{Department of Statistics, Rice University, Houston, 77005. Email: {\tt hongwei.mei@rice.edu}} }
		
	\maketitle
\begin{abstract}
The paper  studies  a class of multidimensional optimal stopping problems with infinite horizon for linear  switching diffusions.  There are two main novelties in the optimal problems considered: the underlying stochastic process has   discontinuous paths and the cost function is not necessarily integrable on the entire time horizon, where the latter is often a key assumption in classical optimal stopping theory for diffusions, cf. \cite[Corollary 2.9]{PS}. Under relatively mild conditions, we show, for the class of multidimensional optimal stopping problems under consideration, that the first entry time of the stopping region is an optimal stopping time. Further, we prove that the corresponding optimal stopping boundaries can be represented as the unique solution to  a nonlinear integral equation. We conclude with an application of our results to the problem of quickest real-time detection of a Markovian drift.
	
\end{abstract}	

\para{Keywords.} Quickest detection, switching diffusions,  optimal  stopping, 
free-boundary problem.

\para{MSC 2010 Codes}: Primary: 60G40, 93C30. Secondary: 60H30, 91B70.
\section{Introduction}\label{intro}

Given a complete probability space $(\Omega,\cF,\PP)$, we consider a system of linear stochastic differential equations 
with dynamics given by
\bel{SDE-0}dX_t^i=\(a^i(\a_t)+\sum_{j=1}^2b^{ij}(\a_t)X_t^j\)dt+\sigma^i(\a_t)X_t^idW_t^i\text{ for }i=1,2,\eel
where $\a$ is a two-state 
Markov chain with state space $M:=\{1,2\}$)  and $W=(W^1,W^2)$ is a two-dimensional standard Brownian motion. 
The coefficients $a^i(\cdot),b^{ij}(\cdot),\sigma^i(\cdot):\{1,2\}\mapsto(-\infty,\infty)$  are appropriate mappings such that the solution process $X$ is non-negative (see Assumption \ref{A1} in Section \ref{sec:pre}). It is well known that equation \eqref{SDE-0} admits a unique solution $X$ and that the couple $(\a,X)$ is a strong Markov process (see, for example, \cite{Yin2009}).

The solution process $X$ of \eqref{SDE-0} coupled with the Markov chain $\a$ is generally referred to as a  \textit{regime switching} diffusion 
The primary motivation for the introduction of the Markov chain $\a$ is to  model the effect of uncertain discrete events on the underlying stochastic system (for example, the transition from a bull market to a bear market during a financial crisis). For more general properties and applications of 
regime switching diffusions, we refer the reader to \cite{Yin2009}.


The key optimal stopping problem we shall consider in the present paper is
\bel{value-0}V(\iota,x)=\inf_{\tau}\EE_{\iota,x}\int_0^\tau e^{-\int_0^t\lambda(\a_s)ds}H(\a_t,X_t)dt,
\eel
where the infimum is taken over all stopping times $\cT_{\a,X}$ of $(\a,X)$ for some appropriate discounting factor function $\lambda(\cdot):\dbN\mapsto(0,\infty)$ and for a running cost function $H:\dbN\times [0,\infty)^2 \mapsto(-\infty,\infty)$.  The key mathematical difficulties in solving this optimal stopping problem are that the paths of $(\a,X)$ are not continuous and that the cost functional (appearing on the right-hand side of equation \eqref{value-0}) may not be integrable on time horizon $[0,\infty)$. 
For general background on the theory of optimal stopping for diffusions,  we refer the reader to \cite{PS} and to the references therein.


In the one-dimensional optimal stopping literature (see, for example, \cite{Jo2006,Mc1965,ZhG2004,Zhang2005}), one usually identifies the value function and  the stopping region explicitly by finding a closed form solution to a one-dimensional free-boundary ordinary differential equation (ODE). For 
multidimensional optimal stopping problems, it is nearly impossible to find a closed form solution for the free-boundary partial differential equation (PDE) in most cases. 
In these cases, one may resort to excessive functions, Green kernel representations, and/or solutions to Hamilton-Jacobi-Bellman (HJB) equations 
(see \cite{Christ2019,Dai2018,Day2008,Day2003,Lam2013,Li2016}).  A key drawback of these characterizations of the value function is that they do not provide 
an exact optimal stopping policy. In models with switching, the continuation region and the stopping region are divided into several layers according to the state of the Markov chain and finding the explicit solution to an optimal stopping problem is not generally straightforward, even when working with one-dimensional free-boundary ODEs. This is because the continuation regions differ by layers, leading to the so-called `transition region' (i.e. the set of points belonging  to the continuation region in one layer and belonging to the stopping region in another layer). 

Unlike the aforementioned works, this paper considers a multidimensional optimal stopping problem for a diffusion under a random switching environment. 
As previously mentioned, the key mathematical difficulties in finding the optimal stopping policy are: (i) the paths of $(\a,X)$ are not continuous and (ii) the cost functional (appearing in the right-hand side in \eqref{value-0}) may not be integrable on time horizon $[0,\infty)$. It is the second difficulty which shall disable our use of a key tool in classical optimal stopping theory (\cite[Corollary 2.9]{PS}), which requires uniform integrability  on the entire time horizon (cf. \cite[(2.2.1)]{PS}).
Despite these challenges, we succeed in giving a nonlinear integral equation for the optimal stopping boundary and prove that the value function is the solution of a free-boundary PDE. 
This 
allows us to represent the optimal stopping boundary as a unique solution to a nonlinear integral equation.  One key difficulty we shall encounter is that Peskir's change-of-variable formula of local time on surfaces in \cite{Pe-2} cannot be employed due to the presence of the Markov chain. This necessitates our development of a generalized It\^o's formula for the value function acting on the switching diffusion even though the function is not  second-order continuously differentiable. 

	
	

\indent The remainder of this work is organized as follows. In Section \ref{sec:pre}, we introduce necessary preliminaries regarding the paper's key optimal stopping problem of interest given in equation \eqref{value-0}. Section \ref{sec:osb} discusses the optimal stopping boundary and its properties. Section  \ref{sec:fbp} presents the corresponding free-boundary PDE. Section \ref{sec:nie} provides the nonlinear integral equation needed to identify the optimal stopping boundary.
Section \ref{sec:exp} concludes the paper by applying the paper's results to a problem of quickest real-time detection of a Markovian drift. An analogous quickest real-time detection problem for a Brownian coordinate drift but without the incorporation of switching environments was solved in \cite{GS} for the one-dimensional case and in \cite{Er2020} for the multidimensional case.
\section{Preliminaries}\label{sec:pre}

This section introduces necessary preliminaries. Recall that $\cT_{\a,X}$ is the collection of all possible stopping times of $(\a,X)$ with respect to the natural filtration augmented with all $\PP$-null sets ($\cF=\{\cF_t:t\geq 0\}$). Since $(\a,X)$ is a L\'evy process, $\cF$ is right-continuous and the first entry time of a closed set is a stopping time if it is finite almost surely (cf. \cite[p.72]{App2009}). We recall the key optimal stopping problem under consideration disclosed in Section \ref{intro}.\\

\noindent{\bf Optimal stopping problem (OSP)}: We wish to find a stopping time $\tau_*\in\cT_{\a,X}$  such that
\bel{value}V(\iota,x)=\EE_{\iota,x}\int_0^{\tau_*} e^{-\int_0^t\lambda(\a_s)ds}H(\a_t,X_t)dt=\inf_{\tau\in\cT_{\a,X}}\EE_{\iota,x}\int_0^\tau e^{-\int_0^t\lambda(\a_s)ds}H(\a_t,X_t)dt,
\eel
subject to
\bel{SDE}dX_t^i=\(a^i(\a_t)+\sum_{j=1}^2b^{ij}(\a_t)X_t^j\)dt+\sigma^i(\a_t)X_t^idW_t^i \text{ for }i=1,2.\eel
We assume that the generator  of the continuous-time Markov Chain is  $Q=(q_{\iota\jmath})_{2\times 2}$, i.e. for $\iota\neq\jmath$, $\sum_{\jmath\in M}q_{\iota\jmath}=0$ and $ q_{\iota\jmath}\geq0$  (\cite[Section 1.4]{Yin2009}). Further, we shall assume that Assumption \ref{A1} below holds throughout the sequel.
 
\noindent \begin{assumption}\label{A1}
 	 For each $\iota\in M$, and for $i=1,2,$ we have that $\lambda(\iota)>0$,  $a^i(\iota)\geq  0$ and $\sigma^i(\iota)\neq 0$. Further, for each $\iota\in M$ and for $i\neq j$,  $b^{ij}(\iota)\geq 0$. 
 \end{assumption} 
\bigskip
\noindent Standard results for stochastic differential equations imply that under Assumption \ref{A1}, equation \eqref{SDE} admits a unique solution $X=(X^1,X^2)$  in $[0,\infty)^2$ for any non-negative initial state. Note that the regime switching diffusion $(\a,X)$ is a homogeneous strong Markov  process. Let $X(\iota,x)=(X^1(\iota,x),X^2(\iota,x)) $ denote the solution of \eqref{SDE} with initial values $(\a_0,X_0)=(\iota,x)$.
For all initial values $(\iota,x)\in M\times(0,\infty)^2$, the solution process $X$ stays positive with probability 1, i.e. for $t>0$, $$\PP_{\iota ,x}\big(X_t\in(0,\infty)^2
\big)=1.$$
For each $\iota\in M$, the following comparison principle (\cite{Fe2000}) holds:
\bel{com}\text{ If }x_1\leq y_1\text{ and } x_2\leq y_2,\,\, X_t^i(\iota,x)\leq X^i_t(\iota,y).\eel 
%
Further, the infinitesimal generator $\LL$ of $(\a,X)$ is non-degenerate on $(0,\infty)^2$ and has the following form  for any $f(\iota,\cdot)\in\sC^2\big((0,\infty)^2\big)$ (cf. (2.4) in \cite{Yin2009}) 
 \bel{gen} \LL f(\iota, x) =\sum_{i=1}^2\(a^i(\iota)+\sum_{j=1}^2b^{ij}(\iota)x_j\)\partial_if(\iota, x)+\frac12\sum_{i=1}^2(\si^i(\iota)x_i)^2\partial_{ii}f(\iota, x)+\sum_{\jmath\in M}q_{\iota\jmath}f(\jmath,x).\eel
Therefore, $(\a,X)$ is strong Feller and admits a transition density $p_t(\iota,x;\jmath,y)$ on $M\times(0,\infty)^2$ 
 as follows $$\PP_{\imath,x}(\a_t=\jmath,X_t\in dy)=p_t(\iota,x;\jmath,y)dy.$$ 
 \indent In the sequel, we shall also assume that Assumption \ref{A2} below holds.

\ss

\begin{assumption}\label{A2} For each $\iota\in M$, $H(\iota,\cdot)$ 
 is concave on $[0,\infty)^2$, locally Lipschitz up to the boundary of   $[0,\infty)^2$,  and is increasing with respect to each variable. Moreover
 $$\lim_{z\rightarrow\infty}H(\iota,(z,0))=\lim_{z\rightarrow\infty}H(\iota,(0,z))=\infty.$$
\end{assumption}
\bigskip
\noindent Under Assumption \ref{A2}, $H$ is bounded from below and therefore the value function in \eqref{value} is well-defined. 
Note that  \eqref{value} is Lagrangian formulated (\cite{GS}). \\
\indent We now pause to introduce some necessary notation for future use$$\underline H:=\inf_{\iota\in M}H(\iota,(0,0)), \q\underline \lambda :=\inf_{\iota\in M}\l(\iota),
\q\text{and}\q\underline a^i :=\inf_{\iota\in M}a^i(\iota).$$
Without loss of generality, we may assume that $\underline H<0$. Otherwise, $\tau=0$ is always optimal for the optimal stopping problem in \eqref{value} and the problem is trivial. Under Assumption \ref{A2}, the inequality \bel{infinitehorizon}\EE_{\iota,x}\int_0^\infty e^{-\int_0^t\lambda(\a_s)ds}H(\a_t,X_t)dt<\infty, \eel 
does not necessarily hold. It is important to note that the inequality in \eqref{infinitehorizon} is analogous to Assumption \cite[(2.2.1)]{PS}, which is needed for \cite[Corollary 2.9]{PS} to hold.\\
\indent In the sequel, we shall denote by $\tau_D$ the first entry time to the set $D$ for the stochastic process $(\a,X)$, i.e.  
\begin{equation}\label{tauD}
\tau_D:=\inf\{t\geq 0: (\a(t),X(t))\in D\},
\end{equation}
where by definition $\inf\emptyset=\infty$. We shall further assume throughout that Assumption \ref{A3} below holds. 
 \begin{assumption}\label{A3} For $i=1,2$, the solution of $\wt X_t^i$ to  $$d\wt X_t^i=(a^i(\a_t)+b^{ii}(\a_t)\wt X_t^i)dt+\sigma^i(\a_t)\wt X_t^idW_t^i$$ 
 	is recurrent to $(c,\infty)$ for any $c>0$.
 \end{assumption}
 \bigskip
\noindent Proposition \ref{veriA3}, which we shall present at the conclusion of this section, shall show that Assumption \ref{A3} may be easily verified. 

\subsection{One-dimensional optimal stopping problems}\label{sec:1d}
We now introduce a one-dimensional optimal stopping problem which shall prove useful for later study.
For any positive real number $N$, we consider the one-dimensional optimal stopping problem
\bel{1dstop}\ba{ll}\ds\wt V^{1,N}(\iota,z)\ad=\inf_\t\EE_{\iota,z} \int_{0}^\t e^{-\int_0^t\lambda(\a_s) ds}H^N(\a_t, (\wt X^1_t,0))dt,\ea\eel
where 
\begin{equation}\label{HN}
H^N(\iota,x)=\min(H(\iota,x), N)
\end{equation}
and where $\wt X^1$ solves the stochastic differential equation 
$$d\wt X_t^1=(a^1(\a_t)+b^{11}(\a_t)\wt X_t^1)dt+\sigma^1(\a_t)\wt X_t^1dW_t^1.$$ 
By the comparison principle in \eqref{com}, we have, for all $t\geq 0$, that if $\wt X_0^1= X_0^1$ then $\wt X_t^1\leq X_t^1$.
 The optimal stopping problem in \eqref{1dstop} is thus a special case of the optimal stopping problem in \eqref{value} when the second coordinate process is fixed at zero. We now consider the optimal stopping region for $\wt V^{1,N}$ in Proposition \ref{1-dtime} below.
 
 \begin{proposition}\label{1-dtime} Suppose   Assumptions  \ref{A1}, \ref{A2} and \ref{A3} hold, and let $N>0$ be large.
 Then the following two statements hold: \ss
 	
 	 {\rm (1)} The optimal stopping region for  $\wt V^{1,N}$ is $\{(\iota,z)\in M\times (0,
 	 \infty):\wt V^{1,N}(\iota,z)=0\}$. 
 	 \ss

 	 {\rm (2)}  For the one-dimensional optimal stopping problem in \eqref{1dstop}, the optimal stopping region for $\wt V^{1,N}$ has the form $$\{(\iota,z)\in M\times(0,\infty): z\geq \bar x_\iota^{1}\},$$ where for $\iota\in M$, $\bar x_\iota^{1}$ are finite positive real numbers. Further, the first entry time of the stopping region is finite almost surely.

 \end{proposition}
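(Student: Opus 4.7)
My plan is to combine classical Snell envelope machinery (applicable here because truncation by $N$ restores the uniform-integrability assumption that fails for the original $H$) with a comparison-principle monotonicity argument in $z$ and an asymptotic HJB analysis to pin down the stopping region.

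Part (1) is handled by the Snell envelope. Since $|H^N|\leq\max(N,|\underline H|)$ and $\lambda(\cdot)\geq\underline\lambda>0$, the discounted integrand in \eqref{1dstop} is uniformly bounded by $\max(N,|\underline H|)/\underline\lambda$, so the obstruction highlighted in the Introduction to applying \cite[Cor.~2.9]{PS}-type results does not arise for the truncated problem. Standard infinite-horizon discounted optimal stopping theory then yields that $\wt V^{1,N}$ is lower semicontinuous, that $D:=\{\wt V^{1,N}=0\}$ is closed, and that the first entry time $\tau_D$ is optimal whenever it is almost surely finite. Since $\tau=0$ is admissible and attains $0$, $\wt V^{1,N}\leq 0$ automatically; this yields statement (1).

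For part (2), the half-line shape of each $\iota$-slice of the stopping region comes from monotonicity: by the comparison principle \eqref{com}, $\wt X^1_t(\iota,z)\leq\wt X^1_t(\iota,z')$ pathwise for $z\leq z'$, and since $H^N(\iota,\cdot)$ inherits from $H$ its monotonicity in each coordinate (Assumption \ref{A2}), the running cost is pathwise nondecreasing in $z$; hence $z\mapsto\wt V^{1,N}(\iota,z)$ is nondecreasing. Combined with $\wt V^{1,N}\leq 0$ and lower semicontinuity, $\{z:\wt V^{1,N}(\iota,z)=0\}$ is a closed half-line $[\bar x^1_\iota,\infty)$ with $\bar x^1_\iota\in[0,\infty]$. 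To show $\bar x^1_\iota<\infty$---the main step---I argue by contradiction: if $\wt V^{1,N}(\iota,\cdot)<0$ on $(0,\infty)$, then the entire $\iota$-layer is continuation and $\wt V^{1,N}$ satisfies the HJB equation
\[
\tfrac12(\sigma^1(\iota))^2 z^2\partial_{zz}\wt V^{1,N}+(a^1(\iota)+b^{11}(\iota)z)\partial_z\wt V^{1,N}+\sum_\jmath q_{\iota\jmath}\wt V^{1,N}(\jmath,z)-\lambda(\iota)\wt V^{1,N}(\iota,z)=-H^N(\iota,(z,0)).
\]
By monotonicity and the uniform bound $\wt V^{1,N}\in[-|\underline H|/\underline\lambda,0]$, the limit $\wt V_\infty(\iota):=\lim_{z\to\infty}\wt V^{1,N}(\iota,z)$ exists. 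Sending $z\to\infty$ (using $H^N(\iota,(z,0))\to N$ and decay estimates that force the derivative terms to vanish in the limit) reduces the HJB to the linear algebraic system $(Q-\text{diag}(\lambda))\wt V_\infty=-N\mathbf{1}$, whose unique solution is componentwise strictly positive---contradicting $\wt V_\infty\leq 0$. Thus $\bar x^1_\iota<\infty$. Finally, setting $\bar c:=\max_\iota\bar x^1_\iota<\infty$, the stopping region contains $M\times[\bar c,\infty)$, so by Assumption \ref{A3} (recurrence of $\wt X^1$ to $(\bar c,\infty)$) we have $\tau_D<\infty$ almost surely.

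The main obstacle is the finiteness of $\bar x^1_\iota$: the HJB route requires rigorous control of $\partial_z\wt V^{1,N}$ and $\partial_{zz}\wt V^{1,N}$ as $z\to\infty$ to pass to the limit. A purely probabilistic alternative, to be pursued in parallel, is to split the expected cost at $\rho_R:=\inf\{t:\wt X^1_t\leq R\}$, use $H^N\geq M_R>0$ on $[0,\rho_R)$ for $R$ large (from Assumption \ref{A2}), and exploit $\EE_{\iota,z}[e^{-\underline\lambda\rho_R}]\to 0$ as $z\to\infty$ (a consequence of standard moment bounds on the linear SDE driving $\wt X^1$) to show that for $z$ large every stopping time yields nonnegative expected cost, forcing $\wt V^{1,N}(\iota,z)=0$ directly.
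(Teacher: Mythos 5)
Your part (1) and the half-line shape of the stopping set in part (2) are fine and essentially parallel the paper (bounded $H^N$ plus the Feller property, then monotonicity of $z\mapsto \wt V^{1,N}(\iota,z)$ via the comparison principle; note only that since $\wt V^{1,N}\leq 0$, closedness of $\{\wt V^{1,N}=0\}=\{\wt V^{1,N}\geq 0\}$ needs upper semicontinuity or, as is actually available here, continuity of the value function). The genuine gap is the finiteness of $\bar x^1_\iota$, which is the main content of the proposition, and neither of your two routes closes it. Route (a) is left incomplete exactly at its crucial step: you assume classical $\sC^2$ regularity of the one-dimensional switching value function on the continuation region (not established at this point) and you need the terms $(a^1(\iota)+b^{11}(\iota)z)\partial_z\wt V^{1,N}$ and $\tfrac12(\sigma^1(\iota))^2z^2\partial_{zz}\wt V^{1,N}$ to vanish as $z\to\infty$, which does not follow merely from $\partial_z\wt V^{1,N},\partial_{zz}\wt V^{1,N}\to 0$ because the coefficients grow like $z$ and $z^2$; you would need a quantitative argument (e.g.\ concavity plus boundedness to get $z\partial_z\wt V^{1,N}\to 0$, and then a consistency argument through the equation for the second-order term), none of which is supplied. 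Moreover the limiting algebraic system $(Q-\mathrm{diag}(\lambda))\wt V_\infty=-N\mathbf{1}$ presumes \emph{all} layers are continuation at infinity, whereas your contradiction hypothesis concerns a single layer.

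Route (b) does not deliver the stated conclusion: the splitting at $\rho_R$ only yields the bound $\wt V^{1,N}(\iota,z)\geq \underline H\,\EE_{\iota,z}\big[e^{-\underline\lambda\rho_R}\big]/\underline\lambda$, i.e.\ $\wt V^{1,N}(\iota,z)\to 0$ as $z\to\infty$, which is strictly weaker than $\wt V^{1,N}(\iota,z)=0$ at some finite $z$ (the value could in principle be negative everywhere while tending to zero, in which case $\bar x^1_\iota=\infty$). The claim that ``for $z$ large every stopping time yields nonnegative expected cost'' is precisely what must be proved and is not implied by your estimates, since a stopping time may continue past $\rho_R$ only on the small-probability event that $\rho_R$ is small; ruling out a profitable such strategy requires a genuine trade-off argument. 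Note also that your route (b) never uses that $N$ is large, whereas the statement (and the paper's proof) does. The paper closes this step differently and more cheaply: assuming the layer-$\iota$ stopping set is empty, the optimal stopping time must exceed the first jump time $\tau_1$ of the chain, whence $\wt V^{1,N}(\iota,z)\geq \EE_{\iota,z}\int_0^{\tau_1}e^{-\int_0^t\lambda(\a_s)ds}\big(H^N(\a_t,(\wt X^1_t,0))-\underline H\big)dt+\underline H/\underline\lambda$, and by Assumption \ref{A2} this lower bound tends to $+\infty$ as $N,z\to\infty$, contradicting $\wt V^{1,N}\leq 0$. You should either adopt this argument or complete one of your two routes with the missing estimates.
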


\begin{proof}
	(1) $H^N$ is bounded and $(\a,X)$ is a Feller process. This follows immediately from \cite{Dai2018}.\ss
	
(2) Note $\wt X^1_t(\iota,x_1)$ is affine in $x_1$ and that $H^N(\iota,\cdot,0)$ is concave increasing. Further, $\wt V^N(\iota,\cdot)$ is also concave increasing on $(0,\infty)$. This suggests that for the optimal stopping boundaries $\{ \bar  x_\iota^{1}\}_{\iota\in M}$ that the stopping region for \eqref{1dstop} should be $\{(\iota,z)\in M\times(0,\infty):z\geq \bar x_\iota^{1}\}.$ We also note that $\bar x_\iota^{1}$  is dependent on $N$ and, for each $\iota\in M$, $\bar x_\iota^{1}$ is decreasing as $N$ increases. That is, if $z>\bar x_\iota^{1}$, $\wt V^{1,N}(\iota,z)=0$; otherwise $\wt V^{1,N}(\iota,z)<0$. We now wish to show, for $N>0$ (large) with $N>N_0$, that $\bar x_\iota^{1}<\infty$ for each $\iota\in M$. We prove this by contradiction.



First, recall that for each $\iota\in M$,
$\wt V^{1,N}(\iota,\cdot)$ is increasing, concave and continuous on $[0,\infty)$. 
For the sake of contradiction, let us suppose that $\bar x_{1}^1=\infty$.  This means that
\bel{allnegative}\wt V^{1,N}(1,z)<0 \text{ for all } z\in[0,\infty).\eel  Let $\tau_*$ denote the first entry time of the stopping region and let $\tau_1:=\inf\{t\geq 0:\a_t\neq 1\}$ be the first transition time of the Markov chain (which is independent of $N$ and $z$). We thus have that $\tau_1\leq\tau_*$ and
$$\ba{ll}\wt V^{1,N}(1,z)
\ad\geq \EE_{1,z} \int_{0}^{\tau_1}e^{-\int_0^t\lambda(\a_s)ds}\(H^N(\a_t,(\wt X^1_t,0))-\underline H\)dt+\underline H/\underline\lambda.
\ea$$
Note that the right-hand side is increasing in $N$ and $z$. By Assumption \ref{A2}, the right-hand side tends to $\infty$ when both $N\rightarrow\infty$ and $z\rightarrow\infty$.
Therefore we may take large $N_0$ and $z_0$ such that $\wt V^{N_0}(1,z_0)>0$. Then for large $N$ with $N>N_0$, it follows that $\wt V^N(1,z)>0$ for $z>z_0$, contradicting \eqref{allnegative}. Thus, by contradiction,  we conclude that $\bar x_1^{1}<\infty$. One may similarly prove that, for each $\iota\in M$, $\bar x_\iota^{1}<\infty$.
By Assumption \ref{A3}, $(\a,\wt X^1)$ will enter the stopping region in finite time. This completes the proof.
\end{proof}	

Of course, the above results for the one-dimensional optimal stopping problem in \eqref{1dstop} can be employed when working with the one-dimensional optimal stopping problem 
\bel{1dosp2}\ba{ll}\ds\wt V^{2,N}(\iota,z)\ad=\inf_\t\EE_{\iota,z} \int_{0}^\t e^{-\int_0^t\lambda(\a_s)ds}H^N(\a_t, (0,\wt X^2_t))dt,\ea\eel
where $\wt X^2$ solves 
$$d\wt X_t^2=(a^2(\a_t)+b^{22}(\a_t)\wt X_t^2)dt+\sigma^2(\a_t)\wt X_t^2dW_t^2.$$
The one-dimensional optimal stopping problems in \eqref{1dstop} and in \eqref{1dosp2} shall prove helpful in our later study of multidimensional optimal stopping problems. We conclude the present section by showing, as promised earlier, that Assumption \ref{A3} may be easily verified.

\begin{proposition}\label{veriA3} The following two statements hold. \\ {\rm (1)} Assumption \ref{A3} holds for $i=1,2$ if 
	 for any $c>0$, there exists a twice continuously differentiable function $ W_c:M\times(0,c]\mapsto(-\infty,\infty)$ such that 
$\lim_{z\rightarrow0^+}W_c(\iota,z)=\infty$ and $$ (a^i(\iota)+b^{ii}(\iota)z)W_c'(\iota, z)+\frac12(\si^i(\iota)z)^2W_c''(\iota, z)+\sum_{\jmath\in M}q_{\iota\jmath}W_c(\jmath,z)\leq 0$$
for $(\iota,z)\in M\times (0,c).$

{\rm (2)}  Assumption \ref{A3} holds if   \bel{H5}a^i(\cdot)>0 \text{ and  }b^{ii}(\cdot)\geq0.\eel 
\end{proposition}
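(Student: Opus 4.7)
The approach is a Lyapunov argument adapted to switching diffusions. Fix $c>0$ and an initial state $(\iota,z)\in M\times(0,c)$, and set $\tau_c:=\inf\{t\ge 0:\wt X^i_t\ge c\}$ and $\tau_{1/n}:=\inf\{t:\wt X^i_t\le 1/n\}$. The first step is to apply the generalized It\^o formula for switching diffusions (see \cite{Yin2009}) to $W_c(\a_t,\wt X^i_t)$ stopped at $\tau_c\wedge\tau_{1/n}\wedge n$. Since $\LL W_c\le 0$ on $M\times(0,c)$ and $W_c$ is bounded on $[1/n,c]$, the stopped process is a bounded supermartingale. After adding a constant so that $W_c\ge 0$, optional stopping yields
\bea
\inf_{\jmath\in M}W_c(\jmath,1/n)\cdot\PP_{\iota,z}\bigl(\tau_{1/n}<\tau_c\wedge n\bigr)\le W_c(\iota,z).
\eea
Letting $n\to\infty$ and using $W_c(\jmath,1/n)\to\infty$ shows that $\wt X^i$ cannot reach $0$ before hitting $(c,\infty)$.

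\textbf{From non-absorption to finite hitting time.} To upgrade this to $\PP_{\iota,z}(\tau_c<\infty)=1$, we would argue on $\{\tau_c=\infty\}$. There the nonnegative supermartingale $W_c(\a_t,\wt X^i_t)$ converges a.s.\ to a finite limit, which combined with $W_c(\jmath,z)\to\infty$ as $z\to 0^+$ forces $\liminf_{t\to\infty}\wt X^i_t>0$; so there are (random) $\epsilon>0$ and $T<\infty$ with $\wt X^i_t\in[\epsilon,c)$ for all $t\ge T$. Partitioning this event by $\{\epsilon\ge 1/k,\ T\le m\}$ and applying the strong Markov property at time $m$, it suffices to show that for each fixed $\epsilon>0$ there is $p=p(\epsilon)>0$ with $\PP_{\jmath,y}(\tau_c\le 1)\ge p$ uniformly in $(\jmath,y)\in M\times[\epsilon,c)$; iteration then gives $\PP_{\jmath,y}(\tau_c>n)\le(1-p)^n\to 0$. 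The uniform hitting estimate follows from the uniform ellipticity of $\LL$ on $M\times[\epsilon,c]$ (since $|\si^i(\iota)z|\ge\min_\iota|\si^i(\iota)|\cdot\epsilon>0$), delivering the required contradiction.

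\textbf{Plan for (2).} The plan is to verify the hypothesis of (1) by an explicit Lyapunov function. Set
\bea
W_c(\iota,z):=\int_z^c e^{K/y}\,dy,\qquad z\in(0,c],
\eea
independent of $\iota$, with $K>0$ chosen small enough that $K(\si^i(\iota))^2/2\le a^i(\iota)$ for every $\iota\in M$ and $i=1,2$. Such $K$ exists since $M$ is finite and $a^i(\iota)>0$. Then $W_c$ is smooth on $(0,c]$ with $W_c(\iota,c)=0$ and $\lim_{z\to 0^+}W_c(\iota,z)=\infty$. Because $W_c$ does not depend on $\iota$, the $Q$-term in $\LL$ vanishes, and a direct computation yields
\bea
\LL W_c(\iota,z)=e^{K/z}\Bigl[-a^i(\iota)-b^{ii}(\iota)z+\tfrac{K(\si^i(\iota))^2}{2}\Bigr]\le 0,
\eea
using $b^{ii}(\iota)\ge 0$ and $z>0$.

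\textbf{Main obstacle.} The delicate step is the passage in (1) from ``$\wt X^i$ does not reach $0$'' to ``$\wt X^i$ hits $(c,\infty)$ in finite time''. The Lyapunov inequality $\LL W_c\le 0$ alone only rules out absorption at $0$; obtaining $\tau_c<\infty$ a.s.\ requires combining it with a uniform non-degeneracy estimate for the hitting probability of $c$ starting from compact subsets of $(0,c)$. Once this is in hand, (2) reduces to a routine verification of the Lyapunov inequality with the exponential test function above.
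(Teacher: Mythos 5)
Your proposal is correct in substance, but it routes part (2) quite differently from the paper. For (1) the paper gives no argument at all beyond citing \cite[Theorem 3.14]{Yin2009}; your sketch is essentially the standard proof behind that citation (localized supermartingale estimate to exclude reaching $0$, then non-degeneracy on compact subsets of $(0,c)$ to force exit through $c$), and the ``main obstacle'' you flag is handled by standard means — note in particular that the uniformity of the hitting bound $\PP_{\jmath,y}(\tau_c\le 1)\ge p$ over $y\in[\epsilon,c)$ comes for free from the comparison principle \eqref{com} (monotonicity in the initial point), so only positivity at $y=\epsilon$ needs a support/ellipticity argument. For (2), however, the paper does not reduce to (1) at all: it applies It\^o's formula directly to $\wt X^1$ stopped at $\tau_c\wedge N$, uses $a^i(\cdot)>0$, $b^{ii}(\cdot)\ge 0$ and $\wt X^1\ge 0$ to get $\underline a^{1}\,\EE_{\iota,x_1}(\tau_c\wedge N)\le c$, and concludes $\EE_{\iota,x_1}\tau_c\le c/\underline a^{1}<\infty$, which is both shorter and strictly stronger (finite mean hitting time, not just a.s.\ finiteness). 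Your alternative — verifying the hypothesis of (1) with the explicit Lyapunov function $W_c(z)=\int_z^c e^{K/y}\,dy$, whose derivatives indeed give $\LL W_c(\iota,z)=e^{K/z}\bigl[-a^i(\iota)-b^{ii}(\iota)z+K(\sigma^i(\iota))^2/2\bigr]\le 0$ for small $K$ and whose $\iota$-independence kills the $Q$-term — is a valid verification, but it makes the elementary statement (2) depend on the heavier machinery of (1); what it buys is robustness (it would survive weakening $a^i>0$ to conditions only ensuring the Lyapunov inequality near $0$), while the paper's drift argument buys brevity and the quantitative bound on $\EE\tau_c$.
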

\begin{proof}
(1) The proof is identical to that of \cite[Theorem 3.14]{Yin2009}. \ss

(2) Consider $\tau_c:=\inf\{t\geq 0: \wt X^1_t\geq c\}$. Since $\wt X^1_t$ never returns to $0$,  It\^o's formula  yields that
 \bea\ba{ll}\ds c\geq\EE_{\iota,x_1} \wt X^1_{\tau_c\wedge N}-x_1\geq\EE_{\iota,x_1}\int_0^{\tau_c\wedge N}a^{1}(\a_s)dt\geq \underline a^{1} \EE_{\iota,x_1}(\tau_c\wedge N).\ea\eea	
 We thus have that
 $\EE_{\iota,x_1}(\tau_c\wedge N)\leq c/\underline a^{1}$. Letting $N\rightarrow\infty$, we obtain
 $$\PP_{\iota,x_1}(\tau_c\geq N)\leq \frac{c}{N\underline a^{1}}\rightarrow 0.$$
Thus $\PP_{\iota,x_1}(\tau_c<\infty)=1$ and Assumption \ref{A3} holds.\end{proof}
 	

\section{Optimal stopping boundary of the optimal stopping problem in \eqref{value}} \label{sec:osb}
In this section, we will derive some helpful properties of the optimal stopping boundary for the optimal stopping problem in \eqref{value}. Throughout the sequel, we shall again assume that Assumptions \ref{A1}, \ref{A2}, and \ref{A3}  hold.\ss

The candidate continuation region $C$ and stopping region $D$ for the optimal stopping problem in \eqref{value} are, respectively,
\bel{contr}\ba{ll} C\ad=\Big\{(\iota,x)\in M\times(0,\infty)^2\!: V(\iota,x)<0\Big\},
\ea\eel
and 
\bel{stopr}\ba{ll}D\ad=\Big\{(\iota,x)\in M\times(0,\infty)^2\!: V(\iota,x)=0\Big\}
.\ea\eel
 $C$ and $D$ are divided into different layers by the discrete states of the Markov chain. Let $C_{\iota}$ and $D_{\iota}$ be the $\iota$th layer of $C$ and $D$, respectively. $C_{\iota}$ and $D_{\iota}$ are subsets of $(0,\infty)^2$, i.e.
\bel{contr-i}\ba{ll} C_\iota=\Big\{x\in (0,\infty)^2\!: V(\iota,x)<0\Big\}
\text{ and }D_\iota=\Big\{x\in (0,\infty)^2\!: V(\iota,x)=0\Big\}
.\ea\eel

Given the formulation of $D$ in  \eqref{stopr} above, a natural question to ask is as follows: is $\tau_D$, the first entry time of the stopping region $D$, the optimal stopping time for the optimal stopping problem in \eqref{value}? We shall soon see that the answer will be `yes', but the solution will be far from trivial because Corollary 2.9 from \cite{PS} cannot be invoked since 
the inequality in  \eqref{infinitehorizon} does not hold. To overcome this difficulty, a key idea we shall employ is to cut off the function $H$ from above  by a positive constant $N$ and then to let $N\rightarrow\infty$.  We shall refer to this idea 
as an `approximation' stopping problem. 

\subsection{Approximation method} \label{sec31}
\indent \indent  To overcome the difficulties mentioned above, we turn to the so-called `approximation' optimal stopping problem in \eqref{OSPN} below.  

\noindent {\bf Optimal Stopping Problem (OSP)}-$N$: We wish to find a stopping time $\tau_*\in\cT_{\a,X}$ such that
\bel{OSPN} V^N(\iota,x)=\EE_{\iota,x}\int_0^{\tau_*}e^{-\int_0^t\lambda(\a_s)ds}H^N(\a_t,X_t)dt=\inf_{\tau\in\cT_{\a,X}}\EE_{\iota,x}\int_0^{\tau}e^{-\int_0^t\lambda(\a_s)ds}H^N(\a_t,X_t)dt,\eel
where $H^N(\iota,x)$ has been defined in \eqref{HN} above. 
 Since $H^N$ is bounded  and $(\a,X)$ is a Feller process, Proposition \ref{prop31} below immediately follows from \cite{Li2016}.
\begin{proposition}\label{prop31}
	$ V^{N}$ is  continuous  and is the unique viscosity solution to the following HJB equation
	\bel{HJB}\min( \LL w-\l w+H^N, -w)=0,\eel
	where $w: M\times(0,\infty)^2\mapsto\dbR$.
	
\end{proposition}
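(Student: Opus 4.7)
The plan is to derive this proposition from the standard viscosity-solution characterization of optimal stopping for strong Markov Feller processes with bounded reward, which is precisely the setting of \cite{Li2016}. There are three things to establish: (i) boundedness, (ii) continuity, and (iii) the viscosity sub/supersolution property together with uniqueness.

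First, I would note that $V^N$ is bounded: since $\underline H\le H^N\le N$ and $\lambda\ge\underline\lambda>0$, the admissible choice $\tau=\infty$ gives $V^N\ge \underline H/\underline\lambda$, while $\tau=0$ gives $V^N\le 0$. Next, I would invoke the Dynamic Programming Principle, which holds here because $(\a,X)$ is strong Markov: for any $\sigma\in\cT_{\a,X}$,
\begin{equation*}
V^N(\iota,x) \;=\; \inf_{\tau\in\cT_{\a,X}}\EE_{\iota,x}\Big[\int_0^{\tau\wedge\sigma} e^{-\int_0^t\lambda(\a_s)ds}H^N(\a_t,X_t)\,dt + \mathbf{1}_{\{\tau>\sigma\}}\,e^{-\int_0^\sigma\lambda(\a_s)ds}V^N(\a_\sigma,X_\sigma)\Big].
\end{equation*}
Continuity of $V^N$ in $x$ then follows from the $L^p$-continuity of the switching diffusion $X$ with respect to initial data (a standard estimate in \cite{Yin2009}) combined with the uniform boundedness and local Lipschitz property of $H^N$; continuity in $\iota$ is automatic because $M$ is a finite set.

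To identify $V^N$ as a viscosity solution of \eqref{HJB}, I would use the test-function characterization. For a $C^2$ function $\varphi$ such that $V^N-\varphi$ attains a local extremum at $(\iota_0,x_0)$, I would apply It\^o's formula to $e^{-\int_0^t\lambda(\a_s)ds}\varphi(\a_t,X_t)$ up to the exit time $\sigma_r$ from a small ball, and then combine with the DPP (taking $\tau=\sigma_r$ for the subsolution inequality and $\tau\le\sigma_r$ arbitrary for the supersolution inequality). The obstacle $-w\ge 0$ encodes the admissibility of $\tau=0$, while $\LL w-\lambda w+H^N\ge 0$ encodes the benefit of continuing; in the interior of the continuation set, equality holds. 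Uniqueness follows from the comparison principle for viscosity solutions of obstacle HJB equations with a finite-state regime-switching coupling, as developed in \cite{Li2016}.

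The main subtlety is controlling the continuity of $V^N$ across jumps of the Markov chain $\a$, but this is handled by the non-degeneracy of the $\sigma^i(\cdot)$ in Assumption \ref{A1} together with the Feller property of $(\a,X)$ noted after \eqref{gen}; everything else reduces to standard viscosity-solution arguments once the DPP and test-function formulation are in hand.
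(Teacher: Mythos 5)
Your proposal is correct and follows essentially the same route as the paper: the paper disposes of this proposition by noting that $H^N$ is bounded and $(\a,X)$ is Feller and then citing \cite{Li2016} directly, and your argument simply unpacks the standard machinery behind that citation (dynamic programming principle, test-function/It\^o verification, and the comparison principle of \cite{Li2016} for uniqueness). Nothing in your sketch conflicts with the paper's treatment.
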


We now let $N\rightarrow\infty$ for the optimal stopping problem in \eqref{OSPN}. Let $\Lambda_t:=\int_0^t\l(\a_s)ds$ and	 $$\left\{\ba{ll}\ad C^{N}:=\Big\{(\iota,x): V^{N}(\iota,x)<0\Big\},~C_\e^{N}:=\Big\{(\iota,x): V^{N}(\iota,x)<-\e\Big\},\\
\ns\ad D^{N}:=\Big\{(\iota,x): V^{N}(\iota,x)\geq 0\Big\},~ D_\e^{N}:=\Big\{(\iota,x): V^{N}(\iota,x)\geq -\e\Big\}.\ea\right.$$
Since $V^N$ is continuous, $C^N$ and $C_\e^N$ are open sets and $D^N$ and $D_\e^N$ are closed sets. Moreover,   $D_\e^N$ is increasing as $N$ increases and is decreasing as $\e$ decreases. We now proceed with some necessary results on the finiteness of the  first entry time into the stopping region $D^N$.
\begin{lemma}\label{finitetau}
	The first entry time $\tau_{D^N}$ of $D^N$ is finite almost surely. Therefore $\tau_D$ and $\tau_{D_\e^N}$ are finite almost surely.
\end{lemma}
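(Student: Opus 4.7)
My plan is to reduce the claim to the one-dimensional auxiliary problem of Section \ref{sec:1d}. I will focus on proving $\t_{D^N}<\i$ almost surely; the other two finiteness statements then follow from simple set inclusions. From $H\ges H^N$ one has $V\ges V^N$, so $V^N(\iota,x)=0$ together with $V\les 0$ forces $V(\iota,x)=0$; hence $D^N\subseteq D$ and $\t_D\les\t_{D^N}$. Also $D^N\subseteq D_\e^N$ by definition, so $\t_{D_\e^N}\les\t_{D^N}$.

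The key intermediate inequality I will establish is
\bea
V^N(\iota,x)\ \ges\ \wt V^{1,N}(\iota,x^1),\qquad (\iota,x)\in M\times(0,\i)^2.
\eea
To prove it I would couple $X$ and $\wt X^1$ on a common probability space using the same Markov chain $\a$ and first Brownian motion $W^1$, with $\wt X^1_0=X^1_0=x^1$. Since $b^{12}(\a_t)X^2_t\ges 0$, the SDE comparison recorded in the paragraph preceding \eqref{1dstop} gives $X^1_t\ges\wt X^1_t$ pathwise; monotonicity of $H^N$ together with $X^2_t\ges 0$ then yields $H^N(\a_t,X_t)\ges H^N(\a_t,(\wt X^1_t,0))$. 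Independence of $W^2$ from $(\a,W^1)$ preserves the strong Markov property of $(\a,\wt X^1)$ in the enlarged filtration, so the one-dimensional HJB implies that
\bea
M_t:=e^{-\L_t}\wt V^{1,N}(\a_t,\wt X^1_t)+\int_0^t e^{-\L_s}H^N(\a_s,(\wt X^1_s,0))ds
\eea
is a bounded submartingale in this enlarged filtration. Optional sampling at any $\t\in\cT_{\a,X}$, combined with $\wt V^{1,N}\les 0$, produces $\EE_{\iota,x}[\int_0^\t e^{-\L_s}H^N(\a_s,(\wt X^1_s,0))ds]\ges\wt V^{1,N}(\iota,x^1)$. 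Chaining with the pathwise inequality above and taking the infimum over $\t$ gives the comparison.

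Set $\bar x^1:=\max_{\iota\in M}\bar x_\iota^1$, which is finite by Proposition \ref{1-dtime}(2). For $x^1\ges\bar x^1\ges\bar x_\iota^1$, $\wt V^{1,N}(\iota,x^1)=0$, so the comparison yields $V^N(\iota,x)\ges 0$; combined with $V^N\les 0$ this forces $V^N(\iota,x)=0$, giving $\{(\iota,x):x^1\ges\bar x^1\}\subseteq D^N$. The pathwise inequality $X^1_t\ges\wt X^1_t$ together with Assumption \ref{A3} (recurrence of $\wt X^1$ to $(\bar x^1,\i)$) ensures that $\wt X^1$ hits $[\bar x^1,\i)$ at some finite random time $t^*$ a.s., at which point $X^1_{t^*}\ges\bar x^1$, so $(\a_{t^*},X_{t^*})\in D^N$. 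Hence $\t_{D^N}\les t^*<\i$ a.s.

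The main technical obstacle is the optional sampling step in the enlarged filtration: $\wt V^{1,N}$ is a priori defined via stopping times of $(\a,\wt X^1)$ alone, but the comparison requires it to control expectations against stopping times $\t\in\cT_{\a,X}$. Independence of $W^1$ and $W^2$, which keeps $(\a,\wt X^1)$ strong Markov under the enlarged filtration, is what allows the one-dimensional submartingale machinery to transfer to the multidimensional setting.
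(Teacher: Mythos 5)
Your proof is correct and follows essentially the same route as the paper: the chain $V\ges V^N\ges \wt V^{1,N}$ forces $D^N$ to contain each layer's half-plane $\{x_1\ges \bar x_\iota^{1}\}$, after which finiteness of $\t_{D^N}$ (hence of $\t_D$ and $\t_{D_\e^N}$ by inclusion) follows from the comparison principle and Assumption \ref{A3} via Proposition \ref{1-dtime}. The only difference is that you justify $V^N(\iota,x)\ges\wt V^{1,N}(\iota,x_1)$ carefully, via the coupled bounded submartingale and optional sampling in the enlarged filtration, to account for stopping times in \eqref{OSPN} that may use $W^2$ --- a step the paper dispatches in one line as monotonicity of $H$.
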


\begin{proof}
	By the monotonicity of $H$, we have 
	$V\geq V^N\geq\wt V^{1,N}$, where $\wt V^{1,N}$ has been defined in \eqref{1dstop}. This means that
	$$D\supset D^N\supset \{(\iota,x_1,x_2)\in M\times(0,\infty)^2: x_1\geq \bar x_\iota^{1}\}.$$
By Proposition \ref{1-dtime}, $\tau_{D^N}$ is finite almost surely. Noting that $\tau_{D_\e^N},\tau_D\leq \tau_{D^N}$, the proof is now complete.
	\end{proof}

\subsection{The optimal stopping region for the optimal stopping problem in \eqref{value}}
The purpose of this section is to present the optimal stopping region for the optimal stopping problem in \eqref{value} by letting $N\rightarrow\infty$ for the optimal stopping problem in \eqref{OSPN}. We begin with Lemma \ref{thmops} below, which assures that the first entry time is optimal for the optimal stopping problem in \eqref{value}.
\begin{lemma}\label{thmops}  If there exists a $N_0>0$ such that $\t_{D^{N_0}}$ is finite almost surely with
	\bel{optrule}\EE_{\iota,x}\int_0^{\t_{D^{N_0}}}e^{-\L_t }|H(\a_t,X_t)|dt<\infty\text{ for any }(\iota,x)\in M\times (0,
	\infty)^2,\eel
 then $\t_{D}$ is the  optimal stopping time of the optimal stopping problem in \eqref{value}, provided that $\partial D$ is Green's regular.
 
	\end{lemma}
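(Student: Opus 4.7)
The plan is to pass to the limit $N\to\infty$ in the approximating problem (OSP)-$N$ from Section \ref{sec31}. First I would observe that $V^N\le V\le 0$ forces $D^N=\{V^N=0\}\subset\{V=0\}=D$, so $\t_D\le\t_{D^N}\le\t_{D^{N_0}}$ for every $N\ge N_0$. In particular $\t_D$ is finite almost surely, and \eqref{optrule} yields
$$\EE_{\iota,x}\int_0^{\t_D}e^{-\L_t}|H(\a_t,X_t)|\,dt<\infty,$$
so $U(\iota,x):=\EE_{\iota,x}\int_0^{\t_D}e^{-\L_t}H(\a_t,X_t)\,dt$ is well-defined with $V(\iota,x)\le U(\iota,x)$. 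Since $H^N$ is bounded, \cite[Corollary 2.9]{PS} applies to (OSP)-$N$ and gives the identity
$$V^N(\iota,x)=\EE_{\iota,x}\int_0^{\t_{D^N}}e^{-\L_t}H^N(\a_t,X_t)\,dt.$$
Taking $\t_{D^N}$ as a competitor for $V$ and subtracting this identity produces
$$0\le V(\iota,x)-V^N(\iota,x)\le\EE_{\iota,x}\int_0^{\t_{D^{N_0}}}e^{-\L_t}(H-H^N)(\a_t,X_t)\,dt,$$
which tends to $0$ by dominated convergence using \eqref{optrule}; hence $V^N\uparrow V$ pointwise.

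Next, I would invoke the strong Markov property of $(\a,X)$ at $\t_D\le\t_{D^N}$ to split the above identity as
$$V^N(\iota,x)=\EE_{\iota,x}\int_0^{\t_D}e^{-\L_t}H^N(\a_t,X_t)\,dt+\EE_{\iota,x}\bigl[e^{-\L_{\t_D}}V^N(\a_{\t_D},X_{\t_D})\bigr].$$
Passing $N\to\infty$, the left-hand side converges to $V(\iota,x)$ by the first step; the first integral on the right converges to $U(\iota,x)$ by dominated convergence (via $|H^N|\le|H|$ and \eqref{optrule}); and the second term vanishes because Green's regularity of $\partial D$ ensures $(\a_{\t_D},X_{\t_D})\in D$ almost surely, so $V(\a_{\t_D},X_{\t_D})=0$, whence $V^N(\a_{\t_D},X_{\t_D})\to 0$; a further dominated-convergence step using the uniform bound $|V^N|\le|\underline H|/\underline\lambda$ completes the passage. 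The upshot is $V(\iota,x)=U(\iota,x)$, which is exactly the optimality of $\t_D$.

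The main obstacle is the vanishing of the second term, where Green's regularity of $\partial D$ is indispensable: without it, the process could hit $\partial D\setminus D$ at time $\t_D$, leaving $V(\a_{\t_D},X_{\t_D})$ strictly negative and breaking the limit. A secondary technicality is the strong Markov decomposition at $\t_D$ in the presence of the jumping regime $\a_t$; this is routine since $(\a,X)$ is itself strong Markov, and the degenerate case $(\a_{\t_D},X_{\t_D})\in D^N$ (where $\t_{D^N}=\t_D$ and $V^N(\a_{\t_D},X_{\t_D})=0$) is consistent with the decomposition.
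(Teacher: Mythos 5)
Your argument is correct in outline, but it takes a genuinely different route from the paper's. The paper never asserts exact optimality of $\tau_{D^N}$ for (OSP)-$N$: it works with the $\e$-fattened sets $D^N_\e$, applies the martingale property of the Snell envelope (Theorem 2.4 of \cite{PS}) localized by the exit times $\tau_j$ of $M\times(1/j,j)^2$, obtains only the inequality $V^N\geq \EE_{\iota,x}\int_0^{\tau_{D^N_\e}}e^{-\Lambda_s}H^N(\a_s,X_s)ds-\e$, and then identifies $\lim_{\e\to0^+}\lim_{N\to\infty}\tau_{D^N_\e}$ with $\tau_D$ via the inclusions $D\subset\cup_N D^N_\e\subset D_\e$, right-continuity of paths and the hitting-time theorem from \cite{Chung2013}; this is where the regularity of the boundary enters. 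You instead take the identity $V^N(\iota,x)=\EE_{\iota,x}\int_0^{\tau_{D^N}}e^{-\Lambda_t}H^N(\a_t,X_t)dt$ from \cite[Corollary 2.9]{PS} and finish with a strong-Markov split at $\tau_D$ together with $V^N\uparrow V$ (your sandwich step matches \eqref{difV}--\eqref{VN}); granted that identity, the rest of your limit passage is sound and arguably cleaner.

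Two caveats. First, your proof hinges entirely on citing Corollary 2.9 for the truncated problem: truncation removes the integrability obstruction, but the process still has jumps, and this is exactly the step the paper chooses to re-derive rather than cite; to make it airtight you should verify the Lagrange-to-Mayer reduction and the standing hypotheses of \cite[Section 2.2]{PS} for the switching diffusion (quasi-left-continuity of the Feller pair $(\a,X)$, continuity of $V^N$ from Proposition \ref{prop31}, and $\tau_{D^N}\leq\tau_{D^{N_0}}<\infty$ for $N\geq N_0$ by monotonicity of $N\mapsto D^N$), at every starting point, since you reuse the identity at $(\a_{\tau_D},X_{\tau_D})$. Second, Green's regularity is not what gives $(\a_{\tau_D},X_{\tau_D})\in D$: that follows from the relative closedness of $D$ (continuity of $V$, Lemma \ref{hatVpp}) together with right-continuity of the paths; in the paper the regularity hypothesis serves the identification of the limiting hitting times. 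This misattribution does not break your argument, but the wording should be corrected.
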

\begin{proof}
Immediately, we see that $D^{N}\subset D^{N}_\e$. Thus, with probability 1, $\t_{D^{N}_\e}\leq \t_{D^{N}}$.  Let $\t_j$ denote the first exit time of $(\a,X)$ from the set  $M\times(1/j,j)^2$.
	 Further, note that $C_\e^{N}$ is an open subset of $C^{N}$ and that the closure of $C_\e^{N}$ is a subset of $C^N$. Employing Snell's envelope (cf. \cite[Theorem 2.4]{PS}), we see that the following process is a martingale
	$$e^{-\Lambda_ {t\wedge \t_j\wedge\t_{D^{N}_\e}}} V^{N}( \a_{t\wedge \t_j\wedge\tau_{D^{N}_\e}}, X_{t\wedge \t_j\wedge\tau_{D^{N}_\e }})+\int_0^{t\wedge \t_j\wedge\tau_{D^{N}_\e}} e^{-\L_ s}H^N(\a_s, X_s)ds.$$

\noindent We proceed to calculate
	\bel{opse}\ba{ll}\ad V^{N}(\iota,x)=\EE_{\iota,x}\[\int_0^{t\wedge \t_j\wedge\t_{D^{N,}_\e}} e^{-\L_ s}H^N( \a_s,X_s)ds+e^{-\Lambda_ {t\wedge \t_j\wedge\t_{D^{N}_\e}}} V^{N}( \a_{t\wedge \t_j\wedge\t_{D^{N,}_\e }},X_{t\wedge \t_j\wedge\t_{D^{N}_\e }})\]\\
	\ns\ad=\EE_{\iota,x}\[\int_0^{\t_j\wedge\t_{D^{N}_\e}}e^{-\L_s } H^N(\a_s,X_s)ds+e^{-\Lambda_ { \t_j\wedge\t_{D^{N}_\e}}} V^{N}(\a_{ \t_j\wedge\t_{D^{N}_\e }}, X_{ \t_j\wedge\t_{D^{N}_\e }})\]\\
	\ns\ad=\EE_{\iota,x}\[\int_0^{\t_{D^{N}_\e}}e^{-\L_ s} H^N( \a_s,X_s)ds+e^{-\Lambda_ {\t_{D^{N}_\e}}} V^{N}(\a_{ \t_{D^{N}_\e }},X_{ \t_{D^{N}_\e }})\]\\
	\ns\ad\geq \EE_{\iota,x}\[\int_0^{\t_{D^{N}_\e}}e^{-\L_ s} H^N( \a_s,X_s)ds\]-\e, \ea\eel	
	where in the second line of the above display we have let $t\rightarrow\infty$ and have invoked the fact that $  V^{N}$ is continuous and bounded. Further, in the third line of the above display, we have (i) let $j\rightarrow\infty$ 
and recalled that for $N>N_0$, $\t_{D_\e^{N}}\leq\tau_{D^N}$ is finite almost surely (Proposition \ref{finitetau}) (ii) applied monotone convergence and (iii) employed the fact that $D^{N}_\e
	$  is closed.

	 Provided \eqref{optrule} holds, it follows by \eqref{opse} that \bel{difV}\ba{ll}V(\iota,x)\ad\geq V^{N}(\iota,x)\geq\EE_{\iota,x}\[\int_0^{\t_{D_\e^{N}}}e^{-\L_ t}H^N( \a_t,X_t)dt\]-\e\\
	\ns\ad \geq V(\iota,x)-\e-\EE_{\iota,x}\int_0^{\t_{D_\e^{N}}}e^{-\L_ t}|H^N(\a_t,X_t)-H( \a_t,X_t)|dt.\ea\eel	
	Sequentially letting $N\rightarrow\infty$ and $\e\rightarrow0^+$, dominated convergence yields that
	\bel{VN}V(\iota,x)=\lim_{N\rightarrow \infty}V^{N}(\iota,x)=\lim_{\e\rightarrow0^+}\lim_{N\rightarrow\infty}\EE_{\iota,x}\int_0^{\t_{D^N_\e}}e^{-\L_t}H( \a_t,X_t)dt.\eel
	%
Further, it is useful to note that
	$$\ba{ll}\ds\bigcup_{N} D_\e^{N}= \bigcup_{N}\{V^{N}\geq -\e\}\subset\{V\geq -\e\}=D_\e,\ea$$
	and 
	$$\ba{ll}\ds\bigcup_{N} D_\e^{N}=\bigcup_{N}\{V^{N}\geq -\e\}\supset\{V\geq 0\}=D.\ea$$
\indent Recall that $(\a,X)$ has right-continuous paths with left-limits (\cite[Proposition 2.4]{Yin2009}). By Theorem 4 in \cite[Section 2.4]{Chung2013}, 
we have that
$$	V(\iota,x)=\EE_{\iota,x}\int_0^{\t_{D}}e^{-\L_t}H( \a_t,X_t)dt$$
i.e. $\tau_D$ is the optimal stopping time. This concludes the proof.
\end{proof}
Lemma \ref{thmops} may be considered an extension of \cite[Corollary 2.9]{PS} in the setting where  (i) the cost functional  is not necessarily integrable on the entire time horizon and (ii) the underlying stochastic process does not have continuous paths. This extension enables us to work with a larger class of multidimensional optimal stopping problems. Before continuing to discuss properties of the stopping and continuation regions in Section \ref{propstop}, we pause to offer some remarks below.
\begin{remark}  
\rm (1) Since $H$ is bounded from below,  \eqref{optrule} is equivalent to
	$$\EE_{\iota,x}\int_0^{\t_{D^{N_0}}}e^{-\L_ t}H(\a_t,X_t)dt<\infty$$
	for any $(\iota,x)\in M\times (0,
	\infty)^2$.
	Further, we have that $D^N\subset D\subset D_\e$. This means that we do not require uniform integrability on the interval $(0,\infty)$. Rather, we only require uniform integrability on a (random) subinterval $(0,\tau_{D^{N_0}})$. \ss
	
\rm \indent (2)	  If $\lambda(\cdot)>0$ and $H$ is bounded on $C^{N_0}$, then \eqref{optrule} holds. We will see this in the next section.
	
	
	

\end{remark}

\subsection{Properties of the stopping and continuation regions}\label{propstop}
This section considers properties of the stopping regions $D$ and continuation regions $C$. The key result of this section is Theorem \ref{OPSStop}. We first turn to a few preparatory lemmas and propositions required for the proof of this theorem. 

\begin{lemma}\label{hatVpp}
	For each $\iota\in M$,	$V(\iota,x)$ is increasing in $x$. Moreover $ V(\iota,\cdot)$ is  concave 
	and continuous on $(0,\infty)^2$.  The same results also hold for $V^N$.
\end{lemma}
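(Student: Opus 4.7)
Monotonicity comes for free from the structural assumptions. Since $a^i(\iota)\ge 0$ and $b^{ij}(\iota)\ge 0$ for $i\ne j$, the comparison principle \eqref{com} gives $X_t^i(\iota,x)\le X_t^i(\iota,y)$ pathwise whenever $x\le y$ componentwise. The monotonicity of $H(\iota,\cdot)$ in each coordinate (Assumption \ref{A2}) then yields, for every stopping time $\tau$,
\[
\EE_{\iota,x}\int_0^\tau e^{-\Lambda_t}H(\a_t,X_t)\,dt \le \EE_{\iota,y}\int_0^\tau e^{-\Lambda_t}H(\a_t,X_t)\,dt,
\]
and taking infima over $\tau$ gives $V(\iota,x)\le V(\iota,y)$. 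The same computation with $H$ replaced by $H^N$ (which is still coordinatewise non-decreasing, as the pointwise minimum of two such functions) proves monotonicity of $V^N$.

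The key observation for concavity is that the SDE \eqref{SDE} is affine in $X$: the drift is affine (a constant term $a^i(\a_t)$ plus a linear-in-$X$ term) and the diffusion $\si^i(\a_t)X_t^i$ is linear in $X$. Hence, if $X_t$ and $Y_t$ are the strong solutions started at $x$ and $y$ respectively (driven by the same Brownian motion and Markov chain), then for $\theta\in[0,1]$ the convex combination $Z_t:=\theta X_t+(1-\theta)Y_t$ satisfies
\[
dZ_t^i=\Bigl(a^i(\a_t)+\sum_{j=1}^2 b^{ij}(\a_t)Z_t^j\Bigr)dt+\si^i(\a_t)Z_t^i\,dW_t^i,\qquad Z_0=\theta x+(1-\theta)y.
\]
By pathwise uniqueness, $X_t(\iota,\theta x+(1-\theta)y)=\theta X_t(\iota,x)+(1-\theta)X_t(\iota,y)$ a.s. Applying the concavity of $H(\iota,\cdot)$ pathwise and then taking expectations gives, for each fixed $\tau$,
\[
\EE_{\iota,\theta x+(1-\theta)y}\!\!\int_0^\tau e^{-\Lambda_t}H(\a_t,X_t)\,dt \;\ge\; \theta\,\EE_{\iota,x}[\cdots]+(1-\theta)\,\EE_{\iota,y}[\cdots]\;\ge\;\theta V(\iota,x)+(1-\theta)V(\iota,y).
\]
Taking the infimum over $\tau$ on the left-hand side yields the concavity of $V(\iota,\cdot)$. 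Because $H^N=\min(H,N)$ is again concave (a pointwise minimum of concave functions is concave), the identical argument applies to $V^N$.

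Finally, finiteness follows from $\underline H>-\infty$: since $e^{-\Lambda_t}\le e^{-\underline\lambda t}$, we have $V(\iota,x)\ge \underline H/\underline\lambda$, and clearly $V\le 0$ by choosing $\tau\equiv 0$. Thus $V(\iota,\cdot)$ is a finite concave function on the open convex set $(0,\infty)^2$, which is automatically continuous there by the standard result that finite concave functions on open convex subsets of $\dbR^d$ are continuous. The same reasoning applies verbatim to $V^N$, which is in fact bounded thanks to $H^N\le N$.

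The main obstacle, conceptually, is verifying the affine structure of the solution map $x\mapsto X_t(\iota,x)$; once this is in hand, concavity of $V$ reduces to the pathwise concavity of $H$, and continuity is an immediate consequence. No delicate regularity or dynamic-programming argument is needed at this stage.
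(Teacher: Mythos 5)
Your proof is correct and follows essentially the same route as the paper's: the comparison principle plus monotonicity of $H$ gives monotonicity of $V$, the affine dependence of $X_t(\iota,\cdot)$ on the initial point plus concavity of $H$ (and of $H^N=\min(H,N)$) gives concavity, and continuity follows since a finite concave function on the open convex set $(0,\infty)^2$ is continuous. The paper states these same steps more tersely; your write-up merely makes the affine-flow identity and the lower bound $V\ge \underline H/\underline\lambda$ explicit.
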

\begin{proof} Let us fix a value of $\iota\in M$. Invoking the comparison principle in \eqref{com},  we see that $ V(\iota,x)$ is increasing in $x$ since $H(\iota,x)$ is also increasing in $x$. Note that  $X^{\iota,x}_t$ is affine with respect to $x$ and that, for each $\iota\in M$, $ V(\iota,\cdot)$ is concave. Consequently $ V(\iota,\cdot)$  is continuous on $(0,\infty)^2$. The aforementioned results also hold for $V^N$ because $H^N$ is concave. This completes the proof.
\end{proof}
We now proceed to define
$$b_{\iota}(x_1):=\inf\Big\{x_2>0:  V(\iota,x_1,x_2)=0\Big\}.$$
The curve $S_{b_\iota}$\,
gives the optimal stopping boundary on the $\iota$th layer.  
One may similarly define $b_\iota^N$ as the optimal stopping boundary for $S^N$. We now prove in Lemma \ref{finiteboundary} below that the optimal stopping time $\tau_D$ given in \eqref{tauD} is finite. 
\begin{lemma}\label{finiteboundary}
	For each $\iota\in M$, when $N>N_0$, and for any $z>0$, we have that $b_\iota(z)<b^N_\iota(z)<\infty$. Moreover,   $$\PP_{\iota,x}(\tau_D<\infty)=1$$
	for all $(\iota,x)\in M\times(0,\infty)^2$.
\end{lemma}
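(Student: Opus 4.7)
The plan is to split the lemma into three claims: (i) $b^N_\iota(z)<\infty$, (ii) $b_\iota(z)<b^N_\iota(z)$ for $N>N_0$, and (iii) $\PP_{\iota,x}(\tau_D<\infty)=1$, and handle them in this order.

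For (i), I would reduce to the one-dimensional auxiliary problem $\wt V^{2,N}$ from \eqref{1dosp2}. Since $X^2$ differs from $\wt X^2$ only by the extra nonnegative drift $b^{21}(\a_t)X^1_t\geq 0$ (recall Assumption \ref{A1}), the comparison principle \eqref{com} gives $X^2_t(\iota,x_1,x_2)\geq\wt X^2_t(\iota,x_2)$ on a common probability space driven by the same $W^2$ and $\a$. Combined with $X^1_t\geq 0$ and the monotonicity of $H$ (Assumption \ref{A2}), this yields $H^N(\a_t,X_t)\geq H^N(\a_t,(0,\wt X^2_t))$ pathwise, and after taking expectation and infimum over stopping times, $V^N(\iota,x_1,x_2)\geq\wt V^{2,N}(\iota,x_2)$. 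Proposition \ref{1-dtime} (applied in the $\wt X^2$ direction) asserts that the right-hand side vanishes for $x_2\geq\bar x^2_\iota<\infty$; combined with $V^N\leq 0$ we obtain $V^N(\iota,x_1,x_2)=0$ whenever $x_2\geq\bar x^2_\iota$, yielding $b^N_\iota(x_1)\leq\bar x^2_\iota<\infty$.

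For (iii), since $H^N\leq H$ we have $V^N\leq V\leq 0$, so $D^N=\{V^N=0\}\subset\{V=0\}=D$ and therefore $\tau_D\leq\tau_{D^N}$ almost surely. Lemma \ref{finitetau} supplies $\tau_{D^N}<\infty$ a.s., and the claim follows.

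For (ii), the weak inequality $b_\iota(z)\leq b^N_\iota(z)$ is immediate from the same inclusion $D^N\subset D$ above, using the graph representation $D_\iota=\{x_2\geq b_\iota(x_1)\}$ (which follows from the monotonicity and continuity of $V(\iota,\cdot)$ in Lemma \ref{hatVpp}). The strict inequality is the main obstacle. To establish it I would fix the boundary point $p:=(\iota,z,b_\iota(z))$, at which $V(p)=0$ by continuity, and aim to show $V^N(p)<0$ by exhibiting a stopping time $\tau>0$ with $\EE_p\int_0^\tau e^{-\Lambda_s}H^N(\a_s,X_s)\,ds<0$, forcing $p\in C^N_\iota$ and hence $b^N_\iota(z)>b_\iota(z)$. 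The idea is to exploit the strong Feller property of $(\a,X)$ established via \eqref{gen}: with positive probability the process visits a small neighborhood of the origin within time $\tau$, where $H^N\leq H(\iota,(0,0))/2 \leq \underline H/2<0$ by the local Lipschitz property in Assumption \ref{A2}, while choosing $N_0$ large enough that the early contributions of $H^N$ along the path (now bounded by $N$) cannot compensate the later negative contributions in expectation, using the strict positivity of $\lambda$ to control the discount factor. The delicate balancing of these signed contributions under the switching dynamics, where neither the PDE characterization nor a small-$\tau$ linearization is directly available since $H$ itself may be positive at $p$, constitutes the hard part of the proof.
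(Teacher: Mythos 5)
Your items (i) and (iii) are correct and are essentially the paper's own argument: the paper likewise reduces finiteness of the truncated boundary to the one--dimensional problem of Proposition \ref{1-dtime} (it uses $\wt V^{1,N}$ and the comparison $X^1\geq \wt X^1$ to get $V^N(\iota,z,0)=0$ for $z>\bar x_\iota^{1}$, then monotonicity; your use of $\wt V^{2,N}$ from \eqref{1dosp2} to get the uniform bound $b^N_\iota(\cdot)\leq \bar x^2_\iota$ is the same technique and, if anything, matches the orientation of $b_\iota$ as a function of $x_1$ more cleanly), and it obtains $\PP_{\iota,x}(\tau_D<\infty)=1$ from $\tau_D\leq\tau_{D^N}$ together with Assumption \ref{A3} via the hitting times $\tau_*$, $\tilde\tau_*$ of the level $\sup_\iota\bar x^1_\iota$ — which is exactly the content of Lemma \ref{finitetau} that you cite.

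Concerning (ii): you are right that the inclusion $D^N\subset D$ only yields $b_\iota\leq b^N_\iota$, and your proposed route to strictness (showing $V^N(\iota,z,b_\iota(z))<0$ by sending the process into $\{H>N\}$ and balancing the truncation gain against the nonnegative cost $H(\iota,z,b_\iota(z))\geq0$ near the boundary) is left as a sketch and is genuinely delicate, so as a proof of the strict inequality your write-up is incomplete. However, you should know that the paper does not prove strictness either: its proof establishes only $b^N_\iota<\infty$ and the almost sure finiteness of $\tau_D$, the weak inequality being implicit from $D^N\subset D$, and nothing downstream (boundedness of $C^{N_0}$ in Theorem \ref{OPSStop}, the use of $\tau_{D^{N_0}}$ in Lemma \ref{thmops}) requires the strict version. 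So the only gap in your proposal coincides with a claim the paper itself glosses over; the parts of the lemma that the paper actually proves, you prove correctly and by the same method. Just do not present your sketched strictness argument as established — either prove it or weaken the inequality to $b_\iota(z)\leq b^N_\iota(z)$, which is all that is used.
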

\begin{proof}
Since  $b_\iota$ is decreasing, it suffices to prove that $b^N_\iota(0^+)<\infty$. 
By Proposition \ref{1-dtime}, for $N>N_0$,
	we have that  $\wt V^{1,N}(\iota,z,0)=0$ for $z>\bar x_\iota^{1}$. Therefore, for $z>\bar x_\iota^{1}$,
	$$\ba{ll}\ad V^N(\iota,z,0)=\inf_{\tau}\EE_{\iota,z,0} \int_{0}^{\t} e^{-\int_0^t\lambda(\a_s)ds}H^N(\a_t,(X^1_t,X^2_t))dt\\
	\ns\ad
	\geq \inf_{\tau}\EE_{\iota,z}\int_{0}^{\tau}e^{-\int_0^t\lambda(\a_s) ds}H^N(\a_t,(\wt X^1_t,0))dt =\wt V^{1,N}(\iota,z)=0.
	\ea$$
		 Thus 
		 $$b^N_\iota(z)\leq b^N_\iota(0^+)\leq \bar x_{\iota}^{1}.$$ Let us now define the two stopping times 
	\bea \tau_*:=\inf\{t\geq0:X^1_t\geq \sup_{\iota\in M}\bar x_\iota^{1}\}\text{ and } \tilde \tau_*:=\inf\{t\geq0:\wt X^1_t\geq \sup_{\iota\in M}\bar x_\iota^{1}\}.\eea
	Clearly, if $X^1_0=\wt X_0^1$, the following inequality for the first entry time 
	$$
	\tau_{D^N}\leq \tau_*\leq \tilde \tau^*,
	$$
	holds with probability 1. 
	By Assumption \ref{A3}, $\tilde \tau^*<\infty$ with probability 1, completing the proof.
	\end{proof}

We now present some further properties of the value function and of the optimal stopping boundary.
\begin{proposition}\label{region} For each $\iota\in M$, the following four statements hold:
	
	
	{\rm (1)} $b_\iota(\cdot)$ is decreasing, convex and continuous on $(0,\infty)$, with $b_\iota(0^+)<\infty$.\ss
	
	{\rm (2)} For any $z\in[0,\infty),\, $$H(\iota,(z,b_\iota(z)))\geq 0$ .\ss
	
	{\rm (3)}	If $(x_1,x_2)\in D_\iota$, and if $\bar x_1\geq x_1$ and $\bar x_2\geq x_2$, then $(\bar x_1,\bar x_2)\in D$. \ss

	{\rm (4)} For $z\geq \bar x_\iota^{2}$, $b_\iota(z)=0$.

\end{proposition}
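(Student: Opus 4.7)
The plan is to handle the four parts in the order (3), (1), (2), (4), since each leans on earlier claims together with the monotonicity and concavity of $V$ (Lemma \ref{hatVpp}), the finite-boundary bound from the proof of Lemma \ref{finiteboundary}, and the one-dimensional stopping analysis in Proposition \ref{1-dtime}. Part (3) is a one-line consequence of componentwise monotonicity of $V(\iota, \cdot)$ and $V \leq 0$: if $V(\iota, x_1, x_2) = 0$ and $\bar x_i \geq x_i$ for $i = 1, 2$, then $0 = V(\iota, x_1, x_2) \leq V(\iota, \bar x_1, \bar x_2) \leq 0$, so $(\bar x_1, \bar x_2) \in D_\iota$. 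Part (1) then falls out in four short steps: monotonicity of $b_\iota$ from the upward-closedness of $D_\iota$ established in (3); convexity of $b_\iota$ from the concavity of $V(\iota, \cdot)$, since $D_\iota = \{V(\iota, \cdot) \geq 0\}$ is the maximum level set of a concave function and hence convex, and its lower boundary $x_2 = b_\iota(x_1)$ is then forced to be a convex function once upward-closedness is taken into account; continuity on the open interval $(0, \infty)$ for free from convexity; and $b_\iota(0^+) < \infty$ from the bound $b_\iota \leq b_\iota^N \leq \bar x_\iota^1$ already established in Lemma \ref{finiteboundary}.

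Part (2) is where I expect the main obstacle, since it requires a supersolution-type property for $V$ itself (not merely for the regularized $V^N$ of Proposition \ref{prop31}) at points where $V$ vanishes while being only continuous a priori, and where the Markov chain jumps muddy the picture. My plan is to use the dynamic programming inequality directly: at a point $x$ in the interior of $D_\iota$, $V(\iota, \cdot) \equiv 0$ in a neighborhood within the $\iota$-th layer, so applying the DPP with the suboptimal rule $\tau = \epsilon$ yields
\[
0 = V(\iota, x) \leq \EE_{\iota, x}\!\left[\int_0^\epsilon e^{-\Lambda_t} H(\a_t, X_t)\, dt + e^{-\Lambda_\epsilon} V(\a_\epsilon, X_\epsilon)\right].
\]
A first-order expansion in $\epsilon$, using that the chain switches at rates $q_{\iota\jmath}$ and that before any switch $V(\iota, X_t) \equiv 0$ (so the diffusion and discount terms contribute nothing at leading order), reduces this to $0 \leq H(\iota, x) + \sum_{\jmath \neq \iota} q_{\iota\jmath} V(\jmath, x)$. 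Since $q_{\iota\jmath} \geq 0$ and $V(\jmath, x) \leq 0$ for $\jmath \neq \iota$, the switching sum is non-positive, forcing $H(\iota, x) \geq 0$ throughout the interior of $D_\iota$. Continuity of $H$ then extends the inequality to every boundary point $(z, b_\iota(z))$.

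For Part (4), I would replicate the one-dimensional comparison used in Lemma \ref{finiteboundary}, but invoking the appropriate axial problem: Proposition \ref{1-dtime} gives $\wt V^{j, N}(\iota, z) = 0$ for $z$ beyond the corresponding threshold and $N$ large, and the pathwise comparison of the 2D process with the 1D reference combined with coordinatewise monotonicity of $H^N$ yields $V^N = 0$ at the relevant axis point. The monotonicity of $V^N$ from (3) then forces $V^N$ to vanish on the full strip through that axis point, and passing $N\to\infty$ via $V \geq V^N$ together with (3) applied to $V$ produces $b_\iota(z) = 0$ on the stated half-line.
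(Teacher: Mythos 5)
Parts (1), (3) and (4) of your proposal follow the paper's own route: (3) is exactly the one-line monotonicity argument, (1) is obtained, as in the paper, from concavity of $V(\iota,\cdot)$ (convexity of the superlevel set $D_\iota$ gives convexity of its lower envelope $b_\iota$), monotonicity, continuity of convex functions on open sets, and the axial comparison bound carried over from Lemma \ref{finiteboundary}, and (4) is the same comparison-with-the-one-dimensional-problem argument via Proposition \ref{1-dtime} and $V\geq V^N$ that the paper invokes by citing Lemma \ref{finiteboundary}.

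Part (2) is where you genuinely diverge. The paper's argument is a direct probabilistic one: if $H(\iota,x)<0$, then postponing the stop until $H(\a_t,X_t)$ first becomes nonnegative accrues a strictly negative expected cost, so $V(\iota,x)<0$ and $x\in C_\iota$; hence $H(\iota,\cdot)\geq 0$ on $D_\iota$ and in particular at $(z,b_\iota(z))$. Your route instead derives the variational inequality $H(\iota,x)+\sum_{\jmath\neq\iota}q_{\iota\jmath}V(\jmath,x)\geq 0$ on the interior of $D_\iota$ by a small-time expansion and then passes to the boundary by continuity (which is legitimate, since $D_\iota$ is convex with nonempty interior, so every point of the graph of $b_\iota$ is a limit of interior points). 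The expansion itself is sound, and it yields slightly more information (the supersolution property of $V$ on the stopping region). The caveat is your starting inequality: the dynamic programming inequality $V(\iota,x)\leq \EE_{\iota,x}[\int_0^\e e^{-\Lambda_t}H\,dt+e^{-\Lambda_\e}V(\a_\e,X_\e)]$ is not available for $V$ at this point in the paper's development. The Snell-envelope/martingale machinery is only established for the truncated problem $V^N$ (Proposition \ref{prop31}, Lemma \ref{thmops}), and the representation of $V$ through $\tau_D$ comes only in Theorem \ref{OPSStop}, whose proof cites Proposition \ref{region}; invoking that representation here would risk circularity, and proving the pasting inequality for $V$ directly requires an $\e$-optimal-selection argument you do not supply. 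The gap is fixable: either run your expansion at the level of $V^N$ (where the submartingale property, or the viscosity supersolution property of \eqref{HJB} on the set where $V^N\equiv 0$, is already available) and let $N\to\infty$, or simply adopt the paper's one-line observation that $\{H(\iota,\cdot)<0\}\subset C_\iota$, which avoids any appeal to a DPP for $V$.
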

\begin{proof}
	(1) Since $ V(\iota, \cdot)$ is concave,
	$b_\iota(\cdot)$ is convex.  By monotonicity of $ V(\iota,\cdot)$, $b_\iota(\cdot)$ is decreasing. Since convex functions are continuous on open sets, $b_\iota(\cdot)$ is  continuous on $(0,\infty)$. The boundedness of $b_\iota$ follows from the proof of Lemma \ref{finitetau}.
	
	(2) For any $x $ such that $H(\iota,x)\leq 0$, it is clear that instantaneous stopping is not optimal; this is because, for a given $x$, we have that $ V(\iota,x)<0$. Therefore, for any $z\in[0,\infty)$, $H(\iota,z,b_\iota(z))\geq 0$.
	
	(3) This immediately follows from the monotonicity of $ V(\iota,\cdot)$.
	
	(4) This immediately follows from Lemma \ref{finiteboundary}.
\end{proof}

We now present this section's key result regarding the optimal stopping and continuation regions. 
\begin{theorem}\label{OPSStop} The set $C$ is the optimal continuation region and $D$ is the optimal stopping region for the optimal stopping problem in \eqref{value}. Moreover, $\tau_D$ as given in \eqref{tauD} is an optimal stopping time for the optimal stopping problem in \eqref{value}, i.e.
	\bel{repD}V(\iota,x)=\EE_{\iota,x}\int_0^{\tau_D} e^{-\int_0^t\lambda(\a_s)ds}H(\a_t,X_t)dt.\eel 
\end{theorem}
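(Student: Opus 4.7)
The plan is to invoke Lemma~\ref{thmops}, whose hypotheses require an $N_0>0$ such that $\tau_{D^{N_0}}$ is finite almost surely and the integrability condition \eqref{optrule} holds, together with Green regularity of $\partial D$. Finiteness of $\tau_{D^{N_0}}$ is already delivered by Lemma~\ref{finitetau} whenever $N_0$ is large enough for Proposition~\ref{1-dtime} to apply to both one-dimensional auxiliary problems \eqref{1dstop} and \eqref{1dosp2}; the substantive work lies in verifying \eqref{optrule} and the regularity.

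The first step is to show that $C^{N_0}$ is bounded in $M\times(0,\infty)^2$. Repeating the comparison argument from the proof of Lemma~\ref{finitetau} symmetrically for both coordinates yields $V^{N_0}(\iota,x_1,x_2)\geq \wt V^{1,N_0}(\iota,x_1)$ and $V^{N_0}(\iota,x_1,x_2)\geq \wt V^{2,N_0}(\iota,x_2)$; combined with $V^{N_0}\leq 0$, this gives
\[
C^{N_0} \subseteq \bigcup_{\iota\in M}\{\iota\}\times(0,\bar x_\iota^{1})\times(0,\bar x_\iota^{2}),
\]
which is bounded. By Assumption~\ref{A2}, $H(\iota,\cdot)$ is continuous up to the boundary of $[0,\infty)^2$, so $|H|\leq K$ on the closure of $C^{N_0}$ for some finite $K$. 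Since $(\a_t,X_t)\in C^{N_0}$ for $t\in[0,\tau_{D^{N_0}})$ and $\Lambda_t\geq\underline\lambda\, t$ by Assumption~\ref{A1},
\[
\EE_{\iota,x}\int_0^{\tau_{D^{N_0}}} e^{-\Lambda_t}|H(\a_t,X_t)|\,dt \leq K/\underline\lambda < \infty,
\]
which is \eqref{optrule}.

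For Green regularity of $\partial D$, Proposition~\ref{region} describes $D_\iota=\{x_2\geq b_\iota(x_1)\}$ as an upper set whose boundary is the graph of a continuous, convex, decreasing function. At a boundary point $(\iota,x_1^*,x_2^*)$ with $x_2^*>0$, non-degeneracy of the $X^2$-diffusion ($\sigma^2(\iota)x_2^*\neq 0$) and Blumenthal's $0$--$1$ law ensure that $X^2_t>x_2^*$ occurs almost surely on every interval $(0,\varepsilon)$; continuity of $X^1$ and of $b_\iota$ then force $(\a_t,X_t)\in D_\iota$ for arbitrarily small $t$, so $\PP_{\iota,x_1^*,x_2^*}(\tau_D=0)=1$. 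Boundary points with $x_2^*=0$ reduce to the regularity of the corresponding one-dimensional stopping region via the non-degeneracy of $X^1$.

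With both hypotheses of Lemma~\ref{thmops} in place, $\tau_D$ is optimal and \eqref{repD} holds; the identification of $C$ and $D$ as the optimal continuation and stopping regions then follows because $V<0$ at $(\iota,x)$ rules out $\tau=0$ as optimal, while $V=0$ makes $\tau=0$ optimal. The main technical hurdle will be the careful verification of Green regularity at boundary points adjacent to the axes and at possible kinks of $b_\iota$; the convexity from Proposition~\ref{region} supplies an exterior-cone condition which, combined with the non-degeneracy of the generator \eqref{gen}, should close the argument.
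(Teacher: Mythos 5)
Your main line---bound $C^{N_0}$ via the two one-dimensional comparison problems \eqref{1dstop} and \eqref{1dosp2}, deduce \eqref{optrule} from boundedness of $H$ on the closure of $C^{N_0}$ together with $\underline\lambda>0$, then invoke Lemma \ref{thmops} with the finiteness of $\tau_{D^{N_0}}$ from Lemma \ref{finitetau}---is exactly the paper's proof, which obtains the boundedness of $C^{N_0}$ from Proposition \ref{region} and Lemma \ref{finiteboundary} and then appeals to Lemma \ref{thmops}; your symmetric use of both auxiliary problems $\wt V^{1,N_0}$ and $\wt V^{2,N_0}$ is a cleaner way of stating the same containment.

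The one step that does not work as written is your verification of Green regularity of $\partial D$. At a boundary point $(\iota,x_1^*,x_2^*)$ with $x_2^*=b_\iota(x_1^*)>0$, the event $\{X^2_t>x_2^*\}$ alone does not place the process in $D_\iota$: since $b_\iota$ is decreasing, on $\{X^1_t<x_1^*\}$ one has $b_\iota(X^1_t)\geq x_2^*$, and the deficit $b_\iota(X^1_t)-x_2^*$ is of the same diffusive order $\sqrt{t}$ as the excess $X^2_t-x_2^*$, so ``continuity of $X^1$ and of $b_\iota$'' cannot force $X^2_t\geq b_\iota(X^1_t)$ at small times. The paper's argument (carried out in Lemma \ref{smoothfit}, on which the theorem implicitly relies for the regularity hypothesis of Lemma \ref{thmops}) avoids this by working with the joint event: restrict to the event that $\a$ does not jump on $[0,t]$ (probability $e^{q_{\iota\iota}t}$), note that for the frozen-regime diffusion $\PP\big(\wt X^{\iota,1}_t>x_1^*,\,\wt X^{\iota,2}_t>x_2^*\big)\rightarrow 1/4$, and use statement (3) of Proposition \ref{region} (the stopping region is an upper set) to conclude that on this event $(\a_t,X_t)\in D$; Blumenthal's $0$--$1$ law then upgrades the liminf bound $1/4$ to probabilistic regularity, and Green regularity in the sense of \eqref{tauD0} follows from the strong Feller property. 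With that two-coordinate correction (and the no-jump restriction, which your sketch also omits), your argument closes and the identification of $C$ and $D$ is as you state.
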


\begin{proof}
By Proposition \ref{region}, $C^{N_0}$ is a bounded region for some $N_0>0$. Then \eqref{optrule} immediately holds since $H(\a_t,X_t)$ is bounded by the first entry time $\tau_{D^{N_0}}$ of $D^{N_0}$. The desired result then holds by Lemma \ref{thmops}.
	\end{proof}

\subsection{Regularity of $V$}\label{sec34}

The aim of this section is to prove the regularity of $ V$ from the representation given in \eqref{repD}. As we mentioned in the introductory section, since the continuation regions in different layers will be different, there exist so-called  {\it transition regions} on each layer.  For example, consider the set $C_1\cap D_2$, where we recall the definitions for $C_\iota$ and $D_\iota$ given in \eqref{contr-i}. Assuming that it is not empty, it is a transition region on the first layer. This is because when $(\a_t,X_t)\in\{1\}\times \big(C_1\cap D_2
\big)$, the process may jump into the stopping region on the second layer, i.e. in a sufficiently small time interval such that $X_t$ does not exit $D_2$). 
For the sake of completeness, we pause to emphasize that the regularity results in \cite{Ei1990} concerning a Dirichlet problem with the same boundaries in different layers cannot be applied in the present setting, as in our problem the boundaries are not the same in different layers.\\ 
\indent We proceed with some necessary notation. Let $\sC^k(U)$ be the set of $k$-th continuously differentiable functions in the domain $U$. Let the Sobolev space $\sW^{k,p}(U)$ be the set of functions with weak derivatives up to order $k$ having finite $L^p$ norm. When $k=2$ and $p=\infty$, $f\in \sW^{2,\infty}((1/K,K)^2)$ for any $K>1$ 
if and only if $Df$ is locally Lipschitz in  $ (0,\infty)^2$. We now turn to Lemma \ref{reprenstationhatV} below.

\begin{lemma} \label{reprenstationhatV}
	For each $\iota\in M$, $ V(\iota,\cdot)\in \sC^2\big(C_{\iota}\big)$ and $V$ solves the following partial differential equation in the `classical sense' (i.e.  $V$ is second order differentiable and the derivatives satisfy \eqref{dirichilet}; for reference, see Chapter 2 and Chapter 4 in \cite{GT}):
	\bel{dirichilet}\ba{ll}\ad\LL  V(\iota,x)-\lambda(\iota) V(\iota,x)=-H(\iota,x)\text{ for  }(\iota,x)\in  C.\ea\eel
\end{lemma}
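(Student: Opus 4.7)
The plan is to establish classical solvability of the Dirichlet problem associated with \eqref{dirichilet} on small balls inside $C_\iota$, and then to verify probabilistically that this solution coincides with $V(\iota,\cdot)$. Fix $\iota\in M$ and any open ball $B$ with $\overline B\subset C_\iota$. Rewrite \eqref{dirichilet} on the $\iota$-th layer as
\begin{equation*}
\LL^{\iota}_0 u(x)-\big(\l(\iota)-q_{\iota\iota}\big)u(x)=-H(\iota,x)-\sum_{\jmath\neq\iota}q_{\iota\jmath}V(\jmath,x),\qquad x\in B,
\end{equation*}
where $\LL^{\iota}_0$ is the pure-diffusion part of $\LL$ at state $\iota$. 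The cross-layer values $V(\jmath,\cdot)$ for $\jmath\neq\iota$ are to be treated as a given inhomogeneity. By Lemma \ref{hatVpp} each $V(\jmath,\cdot)$ is continuous (in fact concave), and by Assumption \ref{A2} $H$ is locally Lipschitz, so the right-hand side is continuous and bounded on $\overline B$. Because $\overline B$ stays inside $(0,\infty)^2$ away from the coordinate axes, the diffusion coefficients $a^i(\iota)+\sum_j b^{ij}(\iota)x_j$ and $(\sigma^i(\iota)x_i)^2$ are smooth and the operator is uniformly elliptic on $B$; moreover the zeroth-order coefficient $-(\l(\iota)-q_{\iota\iota})$ is strictly negative, ensuring the maximum principle.

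Next I would invoke standard linear elliptic theory (\cite[Ch.~4, 9]{GT}): the Dirichlet problem with boundary data $u|_{\partial B}=V(\iota,\cdot)|_{\partial B}$ admits a unique $u\in\sW^{2,p}(B)\cap\sC(\overline B)$ for every $p<\infty$. Sobolev embedding then yields $u\in\sC^{1,\a}(\overline B)$ for every $\a<1$. Running the same $\sW^{2,p}$ step simultaneously on each layer produces $V(\jmath,\cdot)\in\sC^{1,\a}$ on the corresponding balls, so the forcing term in the displayed equation is itself $\sC^{0,\a}$; Schauder estimates then upgrade $u$ to $\sC^{2,\a}(B)$.

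For the verification, extend $u$ to $M\times(0,\infty)^2$ by $u(\iota,\cdot)=u$ on $B$ and $u(\jmath,\cdot)=V(\jmath,\cdot)$ for $\jmath\neq\iota$. Let $\t_1:=\inf\{t\geq 0:\a_t\neq\iota\}$ and $\t_B$ be the first exit time of $X$ from $B$, and set $\t:=\t_1\wedge\t_B$. Since $B\subset C_\iota$, we have $\t\leq\t_D$ with probability one. On $[0,\t)$ the chain stays at $\iota$, so a direct computation using the switching generator shows
\begin{equation*}
\LL u(\iota,X_t)=\LL^{\iota}_0 u(X_t)+q_{\iota\iota}u(X_t)+\sum_{\jmath\neq\iota}q_{\iota\jmath}V(\jmath,X_t)=\l(\iota)u(X_t)-H(\iota,X_t)
\end{equation*}
by the PDE. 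Applying It\^o's formula for switching diffusions to $e^{-\L_t}u(\a_t,X_t)$, the finite-variation part collapses to $-e^{-\L_t}H(\iota,X_t)\,dt$; taking expectations (the local-martingale term is a true martingale because $u$ and its first derivatives are bounded on $B$, and $\t\leq\t_1$ has finite expectation) gives
\begin{equation*}
u(\iota,x)=\EE_{\iota,x}\int_0^{\t}e^{-\L_t}H(\iota,X_t)\,dt+\EE_{\iota,x}\!\left[e^{-\L_{\t}}V(\a_{\t},X_{\t})\right].
\end{equation*}
By the strong Markov property combined with the optimality of $\t_D$ (Theorem \ref{OPSStop}) and $\t\leq\t_D$, the right-hand side also equals $V(\iota,x)$. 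Hence $u\equiv V(\iota,\cdot)$ on $B$, and since $B$ was arbitrary, $V(\iota,\cdot)\in\sC^2(C_\iota)$ and \eqref{dirichilet} holds classically.

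The main obstacle will be the coupled nature of the PDE across layers: the forcing on layer $\iota$ involves $V(\jmath,\cdot)$ for $\jmath\neq\iota$, whose regularity is precisely what we are trying to establish. The remedy is to perform the $\sW^{2,p}\Rightarrow\sC^{1,\a}$ step on all layers simultaneously before invoking Schauder, which breaks the circularity. A secondary point requiring care is justifying that the local martingale arising in the switching It\^o formula is a true martingale up to $\t$; this is handled by the boundedness of $u$ and its gradient on the closed ball $\overline B$ together with the boundedness of the Markov-chain jump intensities.
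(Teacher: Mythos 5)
Your overall architecture (decouple the layers by treating $\sum_{\jmath\neq\iota}q_{\iota\jmath}V(\jmath,\cdot)$ as a known forcing, solve a linear elliptic Dirichlet problem locally, then identify the solution with $V(\iota,\cdot)$ by a probabilistic verification using the strong Markov property and the optimality of $\tau_D$) is sound and is close in spirit to the paper, which instead conditions on the first jump time of $\a$ to obtain a Feynman--Kac representation of $V(\iota,\cdot)$ for the frozen-regime diffusion with killing rate $\lambda(\iota)-q_{\iota\iota}$ and then invokes interior elliptic regularity (\cite[Lemma 6.17]{GT}). However, your bootstrap step contains a genuine circularity. From the $\sW^{2,p}$ theory you obtain an auxiliary solution $u_\jmath$ on a ball in layer $\jmath$; this tells you nothing about $V(\jmath,\cdot)$ until you have proved $u_\jmath=V(\jmath,\cdot)$, and in your scheme that identification is exactly the final verification step, which you carry out with the classical It\^o formula and therefore needs $u_\jmath\in\sC^2$ --- which in turn needs the Schauder upgrade, which needs the forcing to be H\"older, which needs $V(\jmath,\cdot)\in\sC^{1,\a}$, which is what the bootstrap was supposed to deliver. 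Running the $\sW^{2,p}$ step ``simultaneously on all layers'' does not break this loop: it only produces more unidentified auxiliary solutions. (It could be broken by doing the verification already at the $\sW^{2,p}$ stage via the It\^o--Krylov formula for non-degenerate diffusions with $p\geq 2$, but you neither state nor justify that.)

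The good news is that the bootstrap is unnecessary, and the fix is an observation you already have in hand: by Lemma \ref{hatVpp} each $V(\jmath,\cdot)$ is concave on the open set $(0,\infty)^2$, hence \emph{locally Lipschitz}, so the forcing $-H(\iota,\cdot)-\sum_{\jmath\neq\iota}q_{\iota\jmath}V(\jmath,\cdot)$ is already $\sC^{0,\a}$ on $\overline B$ for every $\a<1$ without any prior regularity of $V$ beyond continuity and concavity. With H\"older forcing, Schauder theory (or \cite[Lemma 6.17]{GT}) gives a classical $\sC^{2,\a}$ solution on $B$ in one step, and your verification argument then identifies it with $V(\iota,\cdot)$, proving \eqref{dirichilet}. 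This is precisely the point the paper emphasizes in the remark following the lemma: mere continuity of the data would not yield $\sC^2$ regularity, and it is the concavity (local Lipschitz continuity) of $V(\jmath,\cdot)$ and $H(\jmath,\cdot)$ that makes the elliptic regularity machinery applicable; the paper exploits it directly to conclude $V(\iota,\cdot)\in\sW^{3,\infty}(C_\iota)$, which is in fact slightly stronger than $\sC^2(C_\iota)$.
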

\begin{proof}
Recall that the value function admits the following representation
$$  V(\iota,x)=\EE_{\iota,x}\(\int_0^{\t_D}e^{-\int_0^t\lambda(\a_s)ds}H(\a_t,X_t)dt\).$$
Let $\tau_1$ be defined as the first jump time of $\a$ and let $\tau_2$ be given by $\tau_2=\inf\{t\geq0:X_t\in D_\iota\}.$ Under the measure $\PP_{\iota,x}$, we immediately have that $\tau_1\wedge \tau_2\leq \tau_D$. Employing the strong Markov property of $(\a,X)$, it follows that (cf. (2.9) in \cite{Yin2009})
\begin{equation}\label{MM} V(\iota,x)=\EE_{\iota,x}\(\int_0^{\tau_1\wedge \tau_2}e^{-\int_0^t\lambda(\a_s)ds}H(\a_t,X_t)dt+e^{-\int_0^{\tau_1\wedge \tau_2}\lambda(\a_s)ds}V(\a_{\tau_1\wedge \tau_2},X_{\tau_1\wedge \tau_2})\)=:M_1+M_2.
\end{equation}
\indent We now turn our investigation to the regularity of $ V(\iota,\cdot)$ inside  $C_\iota$. Recall that $\tau_1$ is exponentially distributed with parameter $-q_{\iota\iota}$. Continuing from \eqref{MM} above, we calculate 
\bel{M10}\ba{ll}M_1\ad=\EE_{\iota,x}\int_0^\infty\int_0^{ \tau_2}-q_{\iota\iota}e^{q_{\iota\iota}s}e^{-\lambda(\iota)t}H(\a_t,X_t)I(0\leq t\leq s)dtds\\
\ns\ad=\EE_{\iota,x}\int_0^{ \tilde\tau_2}e^{(q_{\iota\iota}-\lambda(\iota))t}H(\iota,(\wt X^{\iota,1}_{ \tilde\tau_2},\wt X^{\iota,2}_{ \tilde\tau_2}))dt,\ea\eel
where $\wt X=(\wt X^{1,\iota},\wt X^{2,\iota})$ is the solution of the following system of stochastic differential equations
$$d\wt X_t^{\iota,i}=\(a^i(\iota)+\sum_{j=1}^2b^{ij}(\iota)\wt X_t^{\iota,j}\)dt+\sigma^i(\iota)\wt X_t^{\iota,i}dW_t^i\,\text{ for }i=1,2,$$
and $\tilde \tau_2=\inf\{t\geq 0:(\wt X^{\iota,1}_{ \tilde\tau_2},\wt X^{\iota,2}_{ \tilde\tau_2})\notin D_\iota\}$.
We also obtain that
\bel{M20}\ba{ll}M_2\ad=\EE_{\iota,x} e^{-\int_0^{\tau_1}\lambda(\a_s)ds}V(\a_{\tau_1},X_{\tau_1})I(\tau_2\geq\tau_1)+\EE_{\iota,x}e^{-\int_0^{\tau_2}\lambda(\a_s)ds} V(\a_{ \tau_2},X_{ \tau_2})I(\tau_2<\tau_1)\\
\ns\ad=\EE_{\iota,x}\int_0^\infty\sum_{\jmath\neq\iota}q_{\iota\jmath}e^{(q_{\iota\iota}-\lambda(\imath))s} V(\jmath,\wt X^{\iota,1}_{s},\wt X^{\iota,1}_{s})I(\tilde\tau_2\geq s)ds+\EE_{\iota,x}\( e^{(q_{\iota\iota}-\lambda(\iota))\t_2}V(\iota, X_{\t_2})\)\\
\ns\ad=\EE_{\iota,x}\int_0^{\tilde\tau_2}\sum_{\jmath\neq\iota}q_{\iota\jmath}e^{(q_{\iota\iota}-\lambda(\imath)) s} V(\jmath,\wt X^{\iota,1}_{s},\wt X^{\iota,2}_{s})ds+\EE_{\iota,x}\( e^{(q_{\iota\iota}-\lambda(\iota))\t_2}V(\iota,\wt X^{\iota,1}_{ \tilde\tau_2},\wt X^{\iota,2}_{ \tilde\tau_2})\).\ea\eel
It then follows from \eqref{M10} and \eqref{M20} that for any $x\in C_\iota$,
\bel{pppp}\ba{ll} V(\iota,x)\ad=\EE_{\iota,x}\int_0^{ \tilde\tau_2}e^{(q_{\iota\iota}-\lambda(\iota))t}\(H(\iota,(\wt X^{\iota,1}_{ \tilde\tau_2},\wt X^{\iota,2}_{ \tilde\tau_2}))+\sum_{\jmath\neq\iota}q_{\iota\jmath}  V(\jmath,\wt X^{\iota,1}_{ \tilde\tau_2},\wt X^{\iota,2}_{ \tilde\tau_2})\)dt\\
\ns\ad\q+\EE_{\iota,x}\( e^{(q_{\iota\iota}-\lambda(\iota))\tilde\t_2}V(\iota,\wt X^{\iota,1}_{ \tilde\tau_2},\wt X^{\iota,2}_{ \tilde\tau_2})\).\ea\eel
Invoking the ellipticity of the infinitesimal generator of $(\wt X^{\iota,1},\wt X^{\iota,2})$ and the local Lipschitz continuity of $ V(\jmath,\cdot)$  and $H(\jmath,\cdot)$ (which holds by concavity),  by \cite[Lemma 6.17]{GT}, we have that $  V(\iota,\cdot)\in\sW^{3,\infty}(C_\iota)$. That is,
$D^2  V (\iota,\cdot)$ is locally Lipschitz in $C_\iota$. Moreover,
$  V$ solves \eqref{dirichilet} in the classical sense. This concludes the proof.
\end{proof}

\begin{remark}\rm
Given $\tilde \LL $ is an elliptic operator and that $f$ is only assumed to be continuous, it is well known  that the solution $u$ to the elliptic partial differential equation $\tilde \LL u(x)=f(x)$ 
may not be twice continuously differentiable. This means that if $ V$ is only assumed  continuous, classical ellipticity theory does not guarantee that from \eqref{pppp} we will have that $  V(\iota,\cdot)\in \sC^2(C_\iota)$. Thus highlights the critical importance of the concavity of $  V(\iota,\cdot)$ in this setting.
\end{remark}

We now verify in Lemma \ref{smoothfit} below that the principle of smooth fit holds for the value function at the optimal stopping boundary. That is, we prove that for each $\iota\in M$, $V(\iota,\cdot)\in\sC^1\big((0,\infty)^2)$. 

\begin{lemma}\label{smoothfit}
	For each $\iota\in M$, $V(\iota,\cdot)\in\sC^1\big((0,\infty)^2\big)$.
	
\end{lemma}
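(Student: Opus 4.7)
The concavity and continuity of $V(\iota,\cdot)$ on $(0,\infty)^2$ established in Lemma \ref{hatVpp} guarantee that all one-sided partial derivatives $\partial_i^\pm V(\iota,\cdot)$ exist throughout the interior. By Lemma \ref{reprenstationhatV}, $V(\iota,\cdot)\in\sC^2(C_\iota)$, and $V(\iota,\cdot)\equiv 0$ on $D_\iota$, so the only obstruction to $\sC^1$-regularity is continuity of $\nabla V(\iota,\cdot)$ across the free boundary curve $S_{b_\iota}=\{(x_1,b_\iota(x_1)):x_1>0,\,b_\iota(x_1)>0\}$. The plan is to fix a boundary point $x_*=(x_1,b_\iota(x_1))$ and, for $i\in\{1,2\}$, show $\partial_i^- V(\iota,x_*)=\partial_i^+V(\iota,x_*)=0$.

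Since $b_\iota$ is decreasing, the point $x_*+\d e_i$ lies in $D_\iota$ for every small $\d>0$ and every $i\in\{1,2\}$, so $V(\iota,x_*+\d e_i)=V(\iota,x_*)=0$, whence $\partial_i^+V(\iota,x_*)=0$. Concavity combined with the monotonicity of $V(\iota,\cdot)$ gives $\partial_i^- V(\iota,x_*)\ges\partial_i^+V(\iota,x_*)=0$, so only the upper bound $\partial_i^- V(\iota,x_*)\les 0$ is nontrivial.

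To prove this upper bound, I would couple two copies $X^{(1)}$ and $X^{(2)}$ of \eqref{SDE} started at $x_*-\e e_i$ and $x_*$ respectively, driven by the same Brownian motion $W$ and the same Markov chain $\a$. The linearity of \eqref{SDE} in $X$ and the comparison principle \eqref{com} imply that the difference $Y_t:=X^{(2)}_t-X^{(1)}_t$ satisfies a linear SDE with $Y_0=\e e_i$, yielding $|Y_t|\les C\e$ uniformly on any compact time interval in an $L^p$ sense. Let $\t_\e$ denote the first entry of $(\a,X^{(1)})$ into $D$, which is the optimal stopping time for the initial condition $x_*-\e e_i$ by Theorem \ref{OPSStop}. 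Using $\t_\e$ as an admissible (sub-optimal) stopping rule for $X^{(2)}$ in the weak dynamic programming inequality,
\bea
0\;=\;V(\iota,x_*)\;\les\;\EE\[\int_0^{\t_\e}e^{-\L_t}H(\a_t,X^{(2)}_t)\,dt+e^{-\L_{\t_\e}}V(\a_{\t_\e},X^{(2)}_{\t_\e})\];
\eea
subtracting the representation \eqref{repD} for $V(\iota,x_*-\e e_i)$, invoking $V\les 0$ to drop the nonpositive boundary term, and applying the local Lipschitz continuity of $H$ from Assumption \ref{A2}, the key estimate becomes
\bea
-V(\iota,x_*-\e e_i)\;\les\;L\,\EE\[\int_0^{\t_\e}|X^{(2)}_t-X^{(1)}_t|\,dt\]\;\les\;LC\e\,\EE[\t_\e].
\eea
Dividing by $\e$ and letting $\e\to0^+$ then yields $\partial_i^-V(\iota,x_*)\les LC\lim_{\e\to0^+}\EE[\t_\e]$.

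The main obstacle is verifying that $\EE[\t_\e]\to 0$. Convergence in probability $\t_\e\to 0$ follows because $X^{(1)}$ starts only $\e$ below the free boundary and has non-degenerate diffusion in the $i$-th coordinate by Assumption \ref{A1}; a standard exit-time estimate for a non-degenerate diffusion against a Lipschitz boundary in fact delivers $\EE[\t_\e]=O(\e^2)$ at a regular boundary point. For a completely rigorous argument I would dominate $\t_\e$ pathwise by the first entry of a larger set such as $D^{N_0}$ (finite a.s.\ by Lemma \ref{finitetau} and with locally bounded mean near $x_*$ via Assumption \ref{A3}) and then invoke dominated convergence. The case $i=1$ is handled identically, using the monotonicity of $b_\iota$ to ensure $(x_1-\e,b_\iota(x_1))\in C_\iota$; points where $b_\iota$ is flat near $x_1$ are trivial because then $V$ vanishes on both sides in the $e_1$ direction. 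Combining the resulting equality of one-sided derivatives along $S_{b_\iota}$ with the $\sC^2$-regularity in $C_\iota$ from Lemma \ref{reprenstationhatV} and the vanishing of $V$ on $\Int D_\iota$ establishes $V(\iota,\cdot)\in\sC^1\big((0,\infty)^2\big)$.
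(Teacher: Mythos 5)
Your overall strategy is the same as the paper's: the derivative vanishes on the stopping side, and on the continuation side one uses the hitting time of $D$ from the nearby interior point as a suboptimal rule for the boundary point, together with the local Lipschitz property of $H$ and the affine dependence of the flow on the initial condition. The genuine gap is the step you yourself flag: the regularity of the boundary. You assert $\t_\e\to0$ ``because the diffusion is non-degenerate'' and that a standard exit-time estimate gives $\EE[\t_\e]=O(\e^2)$. Neither is justified, and the second is false as stated: for a non-degenerate diffusion started at distance $\e$ below a boundary, the \emph{typical} hitting time is of order $\e^2$, but its expectation is not (for driftless Brownian motion the expected hitting time of a level at distance $\e$ is infinite); here $\EE[\t_\e]$ is at best $O(1)$, driven by paths that wander away and enter $D$ elsewhere. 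The paper proves exactly the missing point: probabilistic regularity of the boundary follows from the monotone structure of the stopping region (statement (3) of Proposition \ref{region}) by restricting to the event that $\a$ does not jump and applying Blumenthal's 0--1 law, and the strong Feller property then upgrades this to Green regularity, i.e. $\tau_D^{\iota,x}\to0$ as $x\to\bar x\in S_{b_\iota}$ from within $C$. Without an argument of this type your limit has no proof; moreover, your fallback of dominating $\t_\e$ by $\tau_{D^{N_0}}$ does not rescue the claim $\EE[\t_\e]\to0$, since Assumption \ref{A3} gives only recurrence, not finiteness of $\EE\,\tau_{D^{N_0}}$, so dominated convergence for the expectation of the time itself is unavailable. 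The cleaner route, which is the paper's, is to keep the Lipschitz difference quotient inside the expectation, so that only almost-sure (or in-probability) convergence $\t_\e\to0$ is needed, not convergence of $\EE[\t_\e]$; the same remark applies to your bound $|X^{(2)}_t-X^{(1)}_t|\les C\e$, which holds only on compact time intervals and should likewise be kept pathwise inside the expectation.

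A secondary point: your final sentence deduces $\sC^1$ from matching one-sided derivatives on $S_{b_\iota}$ plus $\sC^2$ regularity in $C_\iota$ and $V=0$ on $\Int D_\iota$. Pointwise differentiability at each boundary point does not by itself yield continuity of the gradient up to $S_{b_\iota}$ from the continuation side. The paper closes this by concavity: a concave function that is differentiable on an open set is continuously differentiable there (the Borwein--Vanderwerff result cited in the proof), and you should add this step, since you already have concavity from Lemma \ref{hatVpp}.
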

\begin{proof} First, we see that the optimal stopping boundary  is {\it probabilistically regular} (see \cite{DePe})  by noting that, for $x_2=b_{\iota}(x_1)$,
	\bel{Hit}\ba{ll}\ad\liminf_{t\rightarrow 0^+}\PP_{\iota,x}\big((\alpha_t, X_t)\in D\big)\geq \liminf_{t\rightarrow 0^+}\PP_{\iota,x}\Big((\alpha_t, X_t)\in D, \text{ $\a$ does not jump between } [0,t]  \)\\
	\ns\ad=\liminf_{t\rightarrow 0^+}e^{q_{\iota\iota}t}\PP\( \wt X_t^{\iota,1}>x_1,\wt X_t^{\iota,2}>x_2\)=\frac14>0,\ea\eel 
	where in the second line we have employed statement (3) in Proposition \ref{region}. By Blumenthal's 0-1 law, $$\liminf_{t\rightarrow 0^+}\PP_{\iota,x}\big((\alpha_t, X_t)\in D\big)=1.$$ It is evident that $(\a,X)$ is a strong Feller process (\cite{Yin2009}). The optimal stopping boundary is Green's regular (\cite{KA1998,Pe-3}) in the sense that 
	\bel{tauD0}\tau_D^{\iota,x}\rightarrow 0\text{ as $x\in C\rightarrow \bar x\in S_{b_\iota}$.}\eel 
	\indent To complete the proof, we need to invoke the principle of smooth fit. Arguments (a) and (b) below prove that the principle of smooth fit holds in the present setting. \ss
	
	(a) Since $V(\iota,x)=0$ for $x_2\geq b_\iota(x_1)$ and $ V(\iota,x)$ is increasing, it follows that
	\bel{<0}\lim_{h\downarrow0}\frac{ V(\iota,x_1,b_{\iota}(x_2)+h)- V(\iota,x_1,b_{\iota}(x_2))}{h}=0\leq\lim_{h\downarrow0}\frac{ V(\iota,x_1,b_{\iota}(x_2)-h)- V(\iota,x_1,b_{\iota}(x_2))}{-h}.\eel
	
	(b) Since the boundary is Green's regular, note that by \eqref{tauD0} that as $h\rightarrow 0^+$, $\tau^{\iota,x_1,b_{\iota}(x_1)-h}_D\rightarrow 0$. It then follows that
	\bel{>0}\ba{ll}\ad\lim_{h\downarrow0}\frac{ V(\iota,x_1,b_{\iota}(x_1)-h)- V(\iota,x_1,b_{\iota}(x_1))}{-h}\\
	\ns\ad=\lim_{h\downarrow0}\frac1{-h}\EE\int_0^{\tau^{\iota,x_1,b_{\iota}(x_1)-h}_D}e^{-\int_0^t\lambda(\a_s)ds }H(\a_t,X_t(\iota,x_1,b_{\iota}(x_1)-h))dt\\
	\ns\ad\leq \lim_{h\downarrow0}\EE\[\int_0^{\tau^{\iota,x_1,b_{\iota}(x_1)-h}_D}e^{-\int_0^t\lambda(\a_s)ds} \frac{H(\a_t,X_t(\iota,x_1,b_{\iota}(x_1)))-H(\a_t,X_t(\iota,x_1,b_{\iota}(x_1)-h))}{h}dt\]\\
	\ns\ad\leq 0,\ea\eel
where in the second to last line we have used the fact that $H(\iota,\cdot)$ is locally Lipschitz and that as $h\rightarrow 0^+$, $\tau^{\iota,x_1,b_{\iota}(x_1)-h}_D\rightarrow 0$. From \eqref{<0} and \eqref{>0}, we see that $ V(\iota,\cdot)$ is differentiable  at the stopping boundary $S_{b_\iota}$. Since  $ V(\iota,\cdot)$ is concave, we have that $ V(\iota,\cdot)\in\sC^1\big((0,\infty)^2\big)$ since concave differentiable  functions are continuously differentiable on open sets (cf. \cite[Theorem 2.2.2]{Bor2010}). This concludes the proof.
\end{proof}
\begin{remark}\label{rempr}
	\rm One may similarly prove that both of  the boundaries of $D_\e^N$  and $\cup_ND_\e^N$  are probabilistically regular by invoking monotonicity of the boundaries. 
\end{remark}

Given that the principle of smooth fit holds for $V$, we can derive a refined regularity of $V(\iota,\cdot)$ on the whole coordinate plane. We do so in Proposition \ref{refineregularity} below.
\begin{proposition}\label{refineregularity}
	For any $K>1$, we have that	$ V(\iota,\cdot)\in\sW^{2,\infty}\big((1/K,K)^2\big)$.
	
\end{proposition}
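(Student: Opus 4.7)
The plan is to combine the piecewise regularity of $V(\iota,\cdot)$ already established --- namely $V(\iota,\cdot)\in\sC^2(C_\iota)$ from Lemma \ref{reprenstationhatV}, $V(\iota,\cdot)\equiv 0$ on $D_\iota$, and $V(\iota,\cdot)\in\sC^1((0,\infty)^2)$ from Lemma \ref{smoothfit} --- with the concavity of $V(\iota,\cdot)$, and then to upgrade the piecewise $\sC^2$ regularity across the free boundary $S_{b_\iota}$ to $\sW^{2,\infty}$ on $(1/K,K)^2$ via an integration-by-parts argument.

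First I would show that the classical Hessian $D^2 V(\iota,\cdot)$ is essentially bounded on $(1/K,K)^2\setminus S_{b_\iota}$. On $D_\iota\cap(1/K,K)^2$ this is immediate since $V(\iota,\cdot)\equiv 0$. On $C_\iota\cap(1/K,K)^2$, I would rewrite the PDE of Lemma \ref{reprenstationhatV} as
\begin{equation*}
\tfrac12\sum_{i=1}^2(\sigma^i(\iota)x_i)^2\partial_{ii}V(\iota,x)=\lambda(\iota)V(\iota,x)-\sum_{i=1}^2\Big(a^i(\iota)+\sum_{j=1}^2 b^{ij}(\iota)x_j\Big)\partial_i V(\iota,x)-\sum_{\jmath\in M}q_{\iota\jmath}V(\jmath,x)-H(\iota,x).
\end{equation*}
The right-hand side is uniformly bounded on $(1/K,K)^2$: $V$ is bounded (since $\underline H/\underline\lambda\leq V\leq 0$ by comparing with the constant strategies), $\partial_i V$ is continuous on $(0,\infty)^2$ by Lemma \ref{smoothfit} and therefore bounded on the closed box $[1/K,K]^2$, the coefficients are continuous in $x$, and $H(\iota,\cdot)$ is continuous. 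On the left-hand side each summand is non-positive by concavity of $V(\iota,\cdot)$ (Lemma \ref{hatVpp}), while the weights $\tfrac12(\sigma^i(\iota)x_i)^2$ are bounded below by a positive constant on $(1/K,K)^2$ thanks to Assumption \ref{A1}. It follows that $|\partial_{ii}V(\iota,\cdot)|$ is uniformly bounded on $C_\iota\cap(1/K,K)^2$, and the off-diagonal entry is then controlled via negative semidefiniteness of the Hessian through $(\partial_{12}V)^2\leq\partial_{11}V\cdot\partial_{22}V$, giving $D^2V(\iota,\cdot)\in L^\infty\big((1/K,K)^2\setminus S_{b_\iota}\big)$.

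The remaining step is to verify that the distributional second derivatives of $V(\iota,\cdot)$ on $(1/K,K)^2$ coincide almost everywhere with the pointwise derivatives above, so that no singular contribution appears along $S_{b_\iota}$. Because $b_\iota$ is convex on $(0,\infty)$ by Proposition \ref{region}, it is locally Lipschitz, so the graph $S_{b_\iota}$ splits $(1/K,K)^2$ into two Lipschitz subdomains $C_\iota\cap(1/K,K)^2$ and $D_\iota\cap(1/K,K)^2$ on each of which $V(\iota,\cdot)$ is $\sC^2$. For any test function $\varphi\in\sC_c^\infty\big((1/K,K)^2\big)$, using that $V(\iota,\cdot)\in\sC^1$ and integrating $-\int\partial_i V\cdot\partial_j\varphi\,dx$ by parts separately on the two subdomains produces boundary integrals along $S_{b_\iota}$ with opposite outward normals that cancel exactly, since $V(\iota,\cdot)\in\sC^1((0,\infty)^2)$ ensures $\partial_i V$ is continuous across $S_{b_\iota}$. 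Hence the distributional $\partial_{ij}V(\iota,\cdot)$ agrees with the classical derivative a.e., and the bound from the previous step yields $V(\iota,\cdot)\in\sW^{2,\infty}((1/K,K)^2)$. The main obstacle I anticipate is justifying the cancellation of boundary terms along $S_{b_\iota}$ rigorously, which ultimately relies on the Lipschitz regularity of the graph of $b_\iota$ supplied by its convexity from Proposition \ref{region}.
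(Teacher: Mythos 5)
Your proposal is correct and follows essentially the same route as the paper: bounding $\partial_{11}V$ and $\partial_{22}V$ on $C_\iota\cap(1/K,K)^2$ from the PDE of Lemma \ref{reprenstationhatV} together with concavity and the non-degenerate weights $(\sigma^i(\iota)x_i)^2$, controlling $\partial_{12}V$ via negative semidefiniteness of the Hessian, and then combining with the $\sC^1$ regularity of Lemma \ref{smoothfit}. The only difference is that you spell out, via the integration-by-parts cancellation across the Lipschitz graph $S_{b_\iota}$, the final step that the paper asserts directly (that boundedness of $D^2V$ off the boundary plus global $\sC^1$ gives $\sW^{2,\infty}$), which is a welcome extra detail rather than a deviation.
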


\begin{proof}
	Note that $ V(\iota,\cdot)\in\sC^2\big(C_\iota\big)\cap\sC^1\big((0,\infty)^2\big)$. Since $V(\iota,\cdot)$ is concave, we have on $C_\iota$ that $$\begin{pmatrix}\partial_{11}  V&\partial_{12}  V\\
	\partial_{12}  V&\partial_{22}  V\end{pmatrix}\leq 0.$$
	Therefore, $\partial_{11}  V \leq 0$ and	$\partial_{22}  V\leq 0$, and
	\bel{det} \partial_{11}  V\partial_{22}  V-(\partial_{12}  V)^2\geq 0.\eel
	Further, on $C_\iota$, 
	\bel{w2}\ba{ll}\ad -H(\iota,x)=\LL  V(\iota,x)-\lambda(\iota) V(\iota, x)\\
	\ns\ad=\sum_{\jmath=1}^2q_{\iota\jmath}  V(\jmath,x) -\lambda(\iota) V(\iota,x)\\		\ns\ad\qq+\sum_{i=1}^2\(a^i(\iota)

	+\sum_{j=1}^2b^{ij}(\iota)x_j\)\partial_i V(\iota, x)+\frac12\sum_{i=1}^2(\si^i(\iota)x_i)^2\partial_{ii}V(\iota, x).
	\ea\eel
When		 $x$ converges to some point on the optimal stopping boundary in $(1/K,K)^2$, both $\partial_{11}  V$ and $\partial_{22}  V$ are bounded in $(1/K,K)^2\cap C_\iota$. 
		By \eqref{det}, $\partial_{12}  V$ is bounded in $(1/K,K)^2\cap C_\iota$. Combining this with the fact that $  V(\iota,\cdot)\in \sC^1\big((0,\infty)^2\big)$ yields that $\partial_1 V(\iota,\cdot)$ and $\partial_2 V(\iota,\cdot)$ are uniformly Lipschitz in $(1/K,K)^2$, i.e. $  V(\iota,\cdot)\in\sW^{2,\infty}\big((1/K,K)^2\big)$. 
\end{proof}

\section{Free-boundary problem}\label{sec:fbp}
Provided that $\partial [0,\infty)^2$ is the natural boundary for $X$, we expect $V$ to be the solution to the following free-boundary problem
\bel{freeboundayPDE}\left\{\ba{ll}
\ad\LL w(\iota,x)- \lambda(\iota) w(\iota,x) = -H(\iota,x)\;\;
\text{for}\;\; (\iota,x) \in U; \\
\ns\ad w= 0 \text{ on }U^c;\q\partial_1 w=\partial_2 w = 0\text{ on }\partial U.
\ea\right.\eel

We aim to solve the above free-boundary problem in \eqref{freeboundayPDE} by finding a couple  $(w, U)$  
such that $ w(\iota,\cdot)\in  \sC^1((0,\infty)^2)\cap\sC^2(U)$ and such that \eqref{freeboundayPDE} holds in the classical sense  as described as Lemma \ref{reprenstationhatV} above. Theorem \ref{thm4} below connects the value function of our optimal stopping problem in \eqref{value}
with the solution of the free-boundary problem. It is important to note that the first boundary condition in the second line of \eqref{freeboundayPDE} \textit{cannot} be replaced by `$w=0$ on $\partial U$' because  of the aforementioned transition regions associated with the discrete states of the Markov chain.

\begin{theorem} \label{thm4} As before, we suppose that  Assumptions \ref{A1}, \ref{A2} and \ref{A3} hold.  The value function $V$ together with the optimal continuation region $C$ is the classical solution of the free-boundary problem in \eqref{freeboundayPDE}. Restricting $\partial C$  to be in the `admissible class' {\rm(}the definition to be given in Theorem \ref{mainthm}{\rm)} of the free-boundary problem, the solution of \eqref{freeboundayPDE} is unique. 
	
\end{theorem}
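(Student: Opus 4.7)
The plan is to split the theorem into two halves. For the existence half, essentially all of the work has already been done in Section~\ref{sec:osb}: I take $(w,U)=(V,C)$. The interior equation $\LL V(\iota,\cdot) - \l(\iota) V(\iota,\cdot) = -H(\iota,\cdot)$ on $C$ in the classical sense is Lemma~\ref{reprenstationhatV}, and $V=0$ on $C^c=D$ is the defining property \eqref{stopr}. The smooth-pasting condition $\partial_1 V = \partial_2 V = 0$ on $\partial C$ follows from Lemma~\ref{smoothfit} combined with $V\equiv 0$ on $D$: since $V(\iota,\cdot) \in \sC^1\big((0,\infty)^2\big)$ and vanishes identically on $D$, the one-sided partial derivatives computed from inside $D$ are zero, and by continuity the two-sided derivatives on $\partial C$ vanish as well. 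The structure of $\partial C$ as the union of graphs $S_{b_\iota}$ with $b_\iota$ bounded, decreasing and convex (Proposition~\ref{region}) will place the free boundary in the admissible class to be specified in Theorem~\ref{mainthm}.

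For the uniqueness half, let $(w,U)$ be a classical solution whose free boundary lies in the admissible class. The strategy is a verification argument built on the paper's generalized It\^o formula. Applying that formula to the process $e^{-\L_t} w(\a_t, X_t)$ stopped at $\t_U \wedge \t_n$, where $\t_n$ is the exit time of $(\a,X)$ from $M\times(1/n,n)^2$, and using $\LL w - \l w = -H$ on $U$, the vanishing $w=0$ on $U^c$, and the smooth-pasting condition to eliminate any surface local-time contribution along $\partial U$, should yield
$$
w(\iota,x) \;=\; \EE_{\iota,x}\[\int_0^{\t_U\wedge\t_n} e^{-\L_t} H(\a_t,X_t)\, dt + e^{-\L_{\t_U\wedge\t_n}} w(\a_{\t_U\wedge\t_n}, X_{\t_U\wedge\t_n})\].
$$
Passing $n\to\infty$ via the integrability built into the admissible class (the analogue of \eqref{optrule}) then gives $w(\iota,x) = \EE_{\iota,x}\int_0^{\t_U} e^{-\L_t} H(\a_t,X_t)\,dt \geq V(\iota,x)$. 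Repeating the same expansion with an arbitrary $\t\in\cT_{\a,X}$ in place of $\t_U$, and using the variational inequality $\LL w - \l w + H \geq 0$ on $U^c$ together with $w\leq 0$ (both to be part of admissibility) to make $e^{-\L_t}w(\a_t,X_t)+\int_0^t e^{-\L_s}H(\a_s,X_s)\,ds$ a submartingale, optional stopping produces the reverse inequality $w\leq V$. Hence $w=V$, and consequently $U=C$ by Theorem~\ref{OPSStop}.

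The main obstacle is the generalized It\^o formula itself. The candidate $w$ is only $\sC^1$ across $\partial U$ and $\sC^2$ in the interior of $U$ (more precisely locally $\sW^{2,\infty}$, as in Proposition~\ref{refineregularity}), so the classical It\^o formula does not apply directly, and Peskir's change-of-variable formula on surfaces \cite{Pe-2} is inapplicable because of the jumps of the Markov chain $\a$. My plan is to mollify $w$ in the continuous coordinates, apply the classical It\^o formula in the diffusion part between consecutive jumps of $\a$, handle each jump discretely against the compensator $\sum_{\jmath\in M} q_{\iota\jmath} w(\jmath,\cdot)$ that appears in $\LL$, and pass to the limit using the $\sW^{2,\infty}$ bound to control the second-order terms and the smooth-pasting condition to kill the would-be local-time contribution along $\partial U$. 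Specifying the admissible class precisely, so that this mollification-and-limit procedure together with the subsequent optional-stopping step is legitimate, is the technical heart of the argument.
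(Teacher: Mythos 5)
Your existence half is fine and coincides with the paper's: take $(w,U)=(V,C)$, cite Lemma \ref{reprenstationhatV} for the equation on $C$, \eqref{stopr} for $w=0$ on $C^c$, and Lemma \ref{smoothfit} plus $V\equiv 0$ on $D$ for the smooth-fit condition on $\partial C$.

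The uniqueness half, however, has a genuine gap. Your submartingale step for the reverse inequality $w\le V$ needs $\LL w-\l w+H\ge 0$ on $U^c$ and $w\le 0$, and you propose to ``make these part of admissibility.'' But the admissible class of Theorem \ref{mainthm} constrains only the boundary curves $b_\iota$ (bounded, continuous, $H(\iota,z,b_\iota(z))\ge 0$); it imposes no conditions on the function $w$, so adding such conditions proves a different (weaker) statement than Theorem \ref{thm4}. Nor can the inequality on $U^c$ be derived from \eqref{freeboundayPDE}: this is exactly where the transition regions bite. For $x\in U_\iota^c\cap U_\jmath$ one has $w(\iota,\cdot)\equiv 0$ near $x$, so $\LL w(\iota,x)-\l(\iota)w(\iota,x)+H(\iota,x)=\sum_{\jmath\ne\iota}q_{\iota\jmath}w(\jmath,x)+H(\iota,x)$, and the coupling term can be negative and is not dominated by $H(\iota,x)\ge 0$; for the true value function this inequality is part of the HJB characterization, but for an arbitrary classical solution of \eqref{freeboundayPDE} it is simply not available. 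A second problem is that your mollified It\^o formula needs second-derivative bounds up to $\partial U$ (the analogue of $\sW^{2,\infty}$ in Proposition \ref{refineregularity}); for $V$ this came from concavity, but a generic classical solution $w$ is only $\sC^2(U)\cap\sC^1$, so the limit controlling the boundary strip $E^{K,\d}$ is not justified. The paper's route avoids both issues: it shows that the boundary of any admissible solution must satisfy the nonlinear integral equation \eqref{bouneq}, invokes uniqueness of solutions of that equation in the admissible class via the four-step scheme of \cite[Theorem 4.1]{DuPe} to conclude $\partial U=\partial C$, and only then identifies $w$ with $V$ by It\^o's formula on the now-known domain. If you want a pure verification proof, you must either build the variational inequality and the regularity up to $\partial U$ into the definition of a solution (changing the theorem) or first pin down the boundary as the paper does.
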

\begin{proof}
	(1)  By  Lemma \ref{reprenstationhatV} and Lemma \ref{smoothfit}, $ V$  verifies the differential equation in the classical sense on the region $C$. It also satisfies the boundary conditions in \eqref{freeboundayPDE}.
	
	(2) In the next section, we will see that the optimal stopping boundary $\partial C$ is unique if restricted to the `admissible' class. The reason is that the optimal stopping boundary solves a nonlinear integral equation which has a unique solution in the `admissible' class. Provided that the stopping boundary $\partial C$ is unique, uniqueness  holds by applying It\^o's formula. This completes the proof.\end{proof}

\section{Nonlinear integral equation}\label{sec:nie}
In this section, we will show that the optimal stopping boundaries  can be characterized as a unique solution to a coupled system of nonlinear integral equations.  We first recall that $(\a,X)$ is a homogeneous strong Feller process if the initial value is in $ M\times(0,\infty)^2$. Let $p_t(\iota,x;\jmath,y)$ be its transition density, i.e.
$$\PP_{\iota,x}(\a_t=\jmath,X_t\in dy)=p_t(\iota,x;\jmath,y)dy.$$
%
We now present the main theorem of this paper. 
\begin{theorem}\label{mainthm} Suppose $\lambda(\cdot)=\lambda_0>0$ is a constant and that Assumptions \ref{A1}, \ref{A2} and \ref{A3} hold. The optimal stopping boundary $\{b_{\iota}\}_{\iota\in M}$ is the unique solution to the following nonlinear integral equation
	\bel{bouneq}\int_0^\infty\sum_{\jmath\in M}\int_0^\infty\int_0^{b_\jmath(y_1)} e^{-\l_0t}p_t(\iota,x_1,b_\iota(x_1); \jmath,y_1,y_2)H(\jmath,y) dy_2dy_1 dt=0\eel
	for any $x_1\in(0,\infty)$ in the `admissible' class, where the `admissible' class consists of all possible bounded continuous functions such that $H(\iota,z,b_\iota(z))\geq0$ for any $(\iota,z)\in M\times(0,\infty)$. Moreover the value function has the following representation 
	\bel{reV} V(\iota,x)=\int_0^\infty\sum_{\jmath\in M}\int_0^\infty\int_0^{b_\jmath(y_1)}e^{-\l_0t}p_t(\iota,x; \jmath,y)H(\jmath,y) dy_2dy_1 dt.\eel
	
\end{theorem}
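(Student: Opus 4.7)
The plan has three main components: establish the representation \eqref{reV}, extract the integral equation \eqref{bouneq} from it, and prove uniqueness within the admissible class.

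\textbf{Representation of $V$.} I would start from Theorem \ref{OPSStop}, which already gives $V(\iota,x)=\EE_{\iota,x}\int_0^{\tau_D}e^{-\lambda_0 t}H(\alpha_t,X_t)\,dt$, and then apply a generalized It\^o formula to $e^{-\lambda_0(t\wedge\tau_D\wedge\tau_K)}V(\alpha_{t\wedge\tau_D\wedge\tau_K},X_{t\wedge\tau_D\wedge\tau_K})$, where $\tau_K$ is the first exit time of $X$ from $(1/K,K)^2$. Because $V(\iota,\cdot)\in\sC^1\bigl((0,\infty)^2\bigr)\cap\sW^{2,\infty}\bigl((1/K,K)^2\bigr)$ by Lemma \ref{smoothfit} and Proposition \ref{refineregularity}, the usual It\^o formula for the diffusion part combined with the compensated form of the switching jumps applies and produces $\LL V$ as the drift. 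On $[0,\tau_D)$ the trajectory stays in $C$, where $(\LL-\lambda_0)V=-H$ by Lemma \ref{reprenstationhatV}, so only the running cost term survives. Sending $K\to\infty$ and $t\to\infty$, using $V(\alpha_{\tau_D},X_{\tau_D})=0$, the boundedness of $V$, and dominated convergence, recovers the Dynkin identity for $V$; applying Fubini and writing $\EE_{\iota,x}[H(\alpha_t,X_t)\mathbf{1}_{\{t<\tau_D\}}]$ as an integral against the transition density of $(\alpha,X)$ killed at $\tau_D$ (which is supported in $\{y_2<b_\jmath(y_1)\}$) yields \eqref{reV}.

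\textbf{Integral equation.} The integral equation \eqref{bouneq} is then immediate from \eqref{reV}: for each $x_1>0$ the point $(\iota,x_1,b_\iota(x_1))$ lies in $\partial C\subset D$; by Green's regularity of $\partial D$ (cf.\ \eqref{tauD0} in the proof of Lemma \ref{smoothfit}), $\tau_D=0$ a.s.\ when started there, so $V(\iota,x_1,b_\iota(x_1))=0$. Substitution in \eqref{reV} gives \eqref{bouneq}.

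\textbf{Uniqueness.} For an admissible candidate $\{c_\iota\}_{\iota\in M}$ I would define $C^c_\iota:=\{x\in(0,\infty)^2:x_2<c_\iota(x_1)\}$ and set
\[
U^c(\iota,x):=\int_0^\infty\sum_{\jmath\in M}\int_{C^c_\jmath}e^{-\lambda_0 t}p_t(\iota,x;\jmath,y)H(\jmath,y)\,dy\,dt,
\]
interpreted via the transition density killed on $\partial C^c$. By construction $(\LL-\lambda_0)U^c=-H$ on $C^c$, and because $c$ satisfies \eqref{bouneq} one has $U^c=0$ on $\partial C^c$. The admissibility assumption $H(\iota,z,c_\iota(z))\geq 0$, together with the monotonicity/concavity machinery of Section \ref{propstop} transferred to $U^c$, yields the smooth fit $\partial_i U^c=0$ on $\partial C^c$. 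Thus $(U^c,C^c)$ is a classical solution of the free-boundary problem \eqref{freeboundayPDE}, and the uniqueness assertion of Theorem \ref{thm4} forces $(U^c,C^c)=(V,C)$, i.e.\ $c_\iota=b_\iota$ for each $\iota\in M$.

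\textbf{Main obstacle.} The hardest step is the It\^o argument underlying \eqref{reV}. The global regularity of $V$ is only $\sC^1$ across $\partial C$, so a classical second-order It\^o is not directly available; moreover the Markov switching creates transition regions $D_\iota\cap C_\jmath$ ($\jmath\neq\iota$) on which $(\LL-\lambda_0)V\neq -H\mathbf{1}_C$, and a naive application of It\^o to $V(\alpha_t,X_t)$ over $[0,\infty)$ would pick up spurious transition-region contributions. Stopping the expansion at $\tau_D$ sidesteps this---the process stays in $C$ where the PDE holds in the classical sense by Lemma \ref{reprenstationhatV}---after which the remaining care lies in converting $\EE_{\iota,x}\int_0^{\tau_D}$ into the explicit kernel representation while honoring the switching structure. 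This same transition-region phenomenon is what forces the uniqueness step to be restricted to the admissible class defined in the theorem.
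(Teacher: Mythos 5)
The central gap is in your representation step. Equation \eqref{reV} is stated with the transition density $p_t(\iota,x;\jmath,y)$ of the \emph{unkilled} process $(\a,X)$, integrated over the region below the boundaries, whereas your derivation, which expands $V$ only up to $\tau_D$, produces the analogous formula with the density of the process \emph{killed} at $\tau_D$. These are not the same object: after $\tau_D$ the pair $(\a,X)$ can re-enter the continuation region (through a transition region, when a regime switch lands the current position below the boundary of the new layer, or simply because the diffusion moves back below the boundary), and \eqref{reV} counts all of those later sojourns in $C$ while your killed-kernel formula does not; observing that the killed density "is supported in $\{y_2<b_\jmath(y_1)\}$" does not bridge this, since on that set the killed density is strictly smaller whenever return to $C$ has positive probability. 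The paper avoids stopping at $\tau_D$ altogether: it mollifies $V$ (using the $\sW^{2,\infty}$ bound of Proposition \ref{refineregularity}), applies It\^o's formula to $V^\e$ up to the exit time of $M\times(1/K,K)^2$, splits the drift into contributions on $C^{K,\d}$, on a boundary strip $E^{K,\d}$ and on the remaining set, and lets $\e,\d\rightarrow 0$ and then $K,t\rightarrow\infty$ to obtain the global-in-time identity $V(\iota,x)=\EE_{\iota,x}\int_0^\infty e^{-\l_0 t}H(\a_t,X_t)I\big(X_t\in C_{\a_t}\big)dt$, which is exactly \eqref{reV} for the unkilled kernel. The analytic work you label as "the hardest step" is precisely this unstopped argument; replacing it by the expansion stopped at $\tau_D$ does not yield the stated formula.

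Your uniqueness step also has a genuine gap, and it is circular relative to the paper's logic: the uniqueness assertion of Theorem \ref{thm4} is justified in the paper \emph{by} the uniqueness of solutions of \eqref{bouneq}, i.e.\ by Theorem \ref{mainthm} itself, so it cannot be invoked to prove Theorem \ref{mainthm}. Independently of that, the claim that admissibility plus "the machinery of Section \ref{propstop} transferred to $U^c$" yields smooth fit on $\partial C^c$ does not hold: concavity, monotonicity, probabilistic regularity and smooth fit were established for the value function through its optimality (Snell-envelope and regularity arguments), none of which is available for a function built from an arbitrary admissible candidate boundary; for a wrong boundary smooth fit generically fails, and this failure is exactly what uniqueness proofs exploit. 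Note also an internal inconsistency: your $U^c$ is defined through a killed kernel, while \eqref{bouneq} for the candidate is an identity for the unkilled kernel, so "$U^c=0$ on $\partial C^c$" does not follow as claimed. The paper instead establishes uniqueness by the four-step martingale/It\^o comparison scheme of Du Toit and Peskir applied directly to candidate boundaries, which is the argument you would need to reproduce.
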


\begin{proof} We first recall the necessary results on regularity for $V$ in Section \ref{sec34}. For $K>1$, we have that
	\bel{vregu}V(\iota,\cdot)\in\sC^2(C_\iota)\cap\sC^1\big((-\infty,\infty)^2\big)\cap \sW^{2,\infty}\big((1/K,K)^2\big).\eel
	Let the function `$\text{Dis}$' be defined as
	$$\text{Dis}(x, S):=\inf_{\bar x\in S}|x-\bar x|.$$
	\noindent For a given $\d>0$, let
	\bel{defCDe}\left\{\ba{ll} 
	\ad C_\iota^{K,\d}:=\Big\{x\in(1/K,K)^2\cap C_\iota:\text{Dis}(x, \partial C_\iota)>\d\Big\} \\
	\ns\ad D_\iota^{K,\d}:=\Big\{x\in(1/K,K)^2\cap D_\iota:\text{Dis}(x, \partial C_\iota)>\d\Big\}\\
	\ns\ad E_{\iota}^{K,\d}:=(1/K,K)^2\cap(C_\iota^{K,\d}\cup D_\iota^{K,\d})^c.\ea\right.\eel

\noindent For sufficiently small $\e>0$, let $J_\e(x)=\e^{-2}J(x/\e)$  be a mollifier, i.e. a non-negative function $J\in\sC^\infty\big((-\infty,\infty)^2 \big)$ supported on $(-1,1)^2$ with $\int_{-\infty}^\infty\int_{-\infty}^\infty J(x)dx=1$. Further, for $x\in(1/K,K)^2$, let
	$$  V^\e(\iota,x):=\int_{\dbR^2}J_\e(y)  V(\iota,x-y)dy.$$
	 We shall use the notation $*$ to represent the convolution operator. Since $  V(\iota,\cdot)\in\sC^1\big((-\infty,\infty)^2\big)$, $D  V^\e(\iota,x)=J_\e*D  V(\iota,x)$ for $x\in (1/K,K)^2$ when $\e<1/K$. We also know that $  V^\e(\iota,\cdot)\in\sC^2(C_\iota)$ and that for $x\in C_\iota^{K,\d}$ the following equality holds when $\e<1/K$ and $\e<\d$
	  $$D^2  V^\e(\iota,x)=J_\e*D^2  V(\iota,x).$$   

We now document the following four facts for $  V^\e$ when both $\e<1/K$ and $\e<\d$. All of these facts easily follow from the $\sC^\infty$ properties of mollifiers. 
	\begin{itemize}
		\item[(Fact1)]  Since $  V(\iota,\cdot)\in \sC^1\big((0,\infty)^2\big)$, $  V^\e(\iota,\cdot)$ is $\sC^1$ in $(1/K,K)^2$ up to the  boundary, i.e. $  V^\e(\iota,\cdot)\in\sC^1\big([1/K,K]^2\big)$.  Moreover, $  V^\e(\iota,\cdot)$ converges to $  V(\iota,\cdot)$ uniformly  on $[1/K,K]^2$. 
		\item[(Fact2)] Since $J\in\sC^\infty\big((0,\infty)^2\big)$, $  V^\e(\iota,\cdot)\in \sC^\infty\big((1/K,K)^2\big)$.
		\item[(Fact3)] Since, for any $K>1$,  $  V(\iota,\cdot)\in\sW^{2,\infty}\big((1/K,K)^2\big)$, the second order derivatives of $  V^\e(\iota,\cdot) $ are bounded on $(1/K,K)^2$ by a constant independent of $\e$ for $\e$ in $(0,\delta)$.
		This can be seen from the fact that  the Lipschitz constant of $D  V^\e(\iota,\cdot)$ is bounded by the Lipschitz constant of $D  V(\iota,\cdot)$ on $(1/K,K)^2$.
		\item [(Fact4)]   For any $\d>0$, since $  V(\iota,\cdot)=0$ on $D_\iota$, for $\e$ sufficiently small, $  V^\e(\iota,\cdot)=0$ on $ D_\iota^{K,\d}$. Note that $  V(\iota,\cdot)\in \sC^2(C_\iota)$ and $\partial_{11}   V(\iota,\cdot),\partial_{22}   V(\iota,\cdot)$ are bounded on $C_{\iota}^{K,\d}$ (a bounded subset of $C_\iota$ with compact closure). Therefore, $\partial_{11}  V^\e(\iota,\cdot)$ and $\partial_{22}  V^\e(\iota,\cdot)$ converge uniformly on $C_\iota^{K,\d}$ to $\partial_{11}   V(\iota,\cdot)$ and $\partial_{22}  V(\iota,\cdot)$, respectively. 
	\end{itemize} 
	Let us now define $$\rho^t_K=t\wedge\inf\{s\geq 0:(\a_s,X_s)\notin M\times(1/K,K)^2\}.$$ By  (Fact1) and (Fact2), and employing It\^o's formula for $  V^\e(\a_t,X_t)$, we have that
	\bel{decM} \ba{ll}\ad \EE_{\iota,x}    e^{-\l_0 \rho^t_K}V^\e(\a_{\rho^t_K},X_{\rho^t_K})-  V^\e(\iota,x)\\
	\ns\ad =\EE_{\iota,x}\int_0^{\rho^t_K}e^{-\l_0s}\(\LL   V^\e(\a_s,X_s)-\lambda_0V^\e(\a_s,X_s)\)ds\\
\ns\ad=\EE_{\iota,x}\int_0^{\rho^t_K}e^{-\l_0s}\(\LL  V^\e(\a_s,X_s)-\lambda_0  V^\e(\a_s,X_s)\\
\ns\ad\qq\qq\qq\qq-\big[\LL  V(\a_s,X_s)-\lambda _0 V(\a_s,X_s)\big]\)I\big(X_s\in C_{\a_s}^{K,\d}\big)ds\\
\ns\ad\q+\EE_{\iota,x}\int_0^{\rho^t_K}e^{-\l_0s}[\LL   V^\e(\a_s,X_s)-\lambda _0  V^\e(\a_s,X_s)]I\big((X_s)\in E_{\a_s}^{K,\d}\big)ds\\
\ns\ad\q+\EE_{\iota,x}\int_0^{\rho^t_K}e^{-\l_0s}[\LL   V(\a_s,X_s)-\lambda  _0 V(\a_s,X_s)]\[I\big(X_s\in C_{\a_s}^{K,\d}\big)-I\big(X_s\in C_{\a_s}^{K}\big)\]ds\\
\ns\ad\q+\EE_{\iota,x}\int_0^{\rho^t_K}e^{-\l_0s}[\LL   V(\a_s,X_s)-\lambda _0  V(\a_s,X_s)]I\big(X_s\in C_{\a_s}^{K}\big)ds\\
\ns\ad=:M^{K,\d,\e,}_1+M^{K,\d,\e}_2+M^{K,\d,\e}_3+M^{K}_4.\ea\eel
	Sending $\e\rightarrow 0^+$, it follows by (Fact4) that 
	\bel{M1}\lim_{\e\rightarrow 0^+}|M_1^{K,\d,\e}|=0.\eel
	By (Fact3), for some $K>0$ independent of $\e$, 
	\bel{M2}\ba{ll}\ds\lim_{\d\rightarrow 0^+}\lim_{\e\rightarrow 0^+}|M_2^{K,\d,\e}|\ad\leq K\lim_{\d\rightarrow 0^+}{\EE_{\iota,x}}\int_0^{\rho_K^t}I\big(X_s\in E_{\a_s}^{K,\d}\big)ds\\
	\ns\ad \leq K\lim_{\d\rightarrow0^+}\int_0^{t}{\EE_{\iota,x}} I\big(X_s\in E_{\a_s}^{K,\d}\big)ds=0.\ea\eel	
	Similarly, it follows that 
	\bel{M3}\lim_{\d\rightarrow 0^+}\lim_{\e\rightarrow 0^+}|M_3^{K,\d,\e}|=0.\eel	
	Displays \eqref{decM}\textendash\eqref{M3}, together with (Fact1), enable us to conclude that
	$$ \ba{ll}\ad\EE_{\iota,x}    e^{-\l_0 \rho^t_K}V(\a_{\rho^t_K},X_{\rho^t_K})-  V(\iota,x)\\
	\ns\ad=\EE_{\iota,x}\int_0^{\rho_K^t}e^{-\l_0 s}[\LL   V(\a_s,X_s)-\lambda V(\a_s,X_s)]I\big(X_s\in C_{\a_s}^{K}\big)ds\\
	\ns\ad=-\EE_{\iota,x}\int_0^{\rho_K^t}e^{-\l_0 s} H(\a_s,X_s)I\big(X_s\in C_{\a_s}^{K}\big)ds.\ea$$
We now let $K\rightarrow\infty$ and $t\rightarrow\infty$. Since $V$ is bounded, dominated convergence yields that
	\bel{reVVVV} \ba{ll}\ad  V({\iota,x})=\EE_{\iota,x}\int_0^{\infty}e^{-\l_0 t}H(\a_t,X_t)I\big(X_t\in C_{\a_t}\big)dt.\ea\eel
	This is equivalent to \eqref{reV} by the definition of $C$ in \eqref{contr}.
	
	Note that $  V=0$ on $\partial C$; further, \eqref{bouneq} 	holds directly from the fact that $V=0$ on the optimal stopping boundary. To show that $\{b_{\iota}\}_{\iota\in M}$ is a unique solution to the
	equation  \eqref{bouneq} in the specified class of functions, one can
	adopt the four-step procedure from the proof of uniqueness given in
	\cite[Theorem 4.1]{DuPe}. 
	Given that the
	present setting creates no additional difficulties we will omit
	further details of this verification. This completes the proof.
\end{proof}

	

We now disclose the solution to the optimal stopping problem in \eqref{value}, which follows directly as a corollary of Theorem \ref{mainthm} above.
\begin{corollary} \label{cor51} Under Assumptions \ref{A1}, \ref{A2} and \ref{A3} and $\lambda(\cdot)=\lambda_0$,
 the optimal stopping time for the optimal stopping problem in \eqref{value}  is given by
	\bea\ba{ll}\ds\tau_*\ad=\inf\Big\{t\geq 0: X^2_t \geq b_{\a_t}(X^1_t)\Big\},\ea\eea
	where $\{b_\iota\}_{\iota\in M}$ is the unique solution to \eqref{bouneq} in the admissible class as defined  in Theorem \ref{mainthm}.
\end{corollary}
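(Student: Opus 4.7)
The plan is to assemble the corollary from three ingredients already in place: the optimality of the first entry time of the stopping region (Theorem \ref{OPSStop}), the characterization of the stopping region layer by layer via the boundary function $b_\iota$ (Lemma \ref{hatVpp} together with Proposition \ref{region}), and the uniqueness statement in Theorem \ref{mainthm}. Essentially no new computation is required; the task is to identify the stopping time $\tau_D$ with the first entry into the set $\{x_2 \geq b_{\alpha}(x_1)\}$ and then to invoke the integral-equation characterization.

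First, I would recall the definition
$$b_\iota(x_1) = \inf\{x_2 > 0 : V(\iota,x_1,x_2) = 0\},$$
and note that by Lemma \ref{hatVpp} the function $V(\iota,\cdot)$ is increasing in each coordinate, so $V(\iota,x_1,x_2) = 0$ whenever $x_2 \geq b_\iota(x_1)$ and $V(\iota,x_1,x_2) < 0$ whenever $x_2 < b_\iota(x_1)$. Combining this with the definition of $D_\iota$ in \eqref{contr-i}, I obtain the layerwise identification
$$D_\iota = \{(x_1,x_2) \in (0,\infty)^2 : x_2 \geq b_\iota(x_1)\},$$
so that $\tau_D$ defined in \eqref{tauD} coincides with $\tau_* = \inf\{t \geq 0 : X^2_t \geq b_{\alpha_t}(X^1_t)\}$.

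Next, Theorem \ref{OPSStop} yields that under Assumptions \ref{A1}, \ref{A2}, \ref{A3}, the stopping time $\tau_D$ is optimal for the optimal stopping problem in \eqref{value} and furnishes the representation \eqref{repD}. Substituting the layerwise identification above gives the claimed form of $\tau_*$. Finally, Theorem \ref{mainthm}, specialized to $\lambda(\cdot) = \lambda_0$, provides the uniqueness of $\{b_\iota\}_{\iota\in M}$ in the admissible class as a solution to the nonlinear integral equation \eqref{bouneq}, which completes the statement.

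Since every step is a direct appeal to a previously established result, I do not anticipate a substantive obstacle. The only point that requires mild care is making sure that the values $b_\iota(x_1) = 0$ and $b_\iota(x_1) = \infty$ are handled consistently with the convention that stopping is instantaneous on $D$; this is covered by statement (4) of Proposition \ref{region} (where $b_\iota(z) = 0$ for $z \geq \bar x^2_\iota$) and by the finiteness of $b_\iota(0^+)$ established in Lemma \ref{finiteboundary}. With these in hand, the chain of identifications holds everywhere on $(0,\infty)^2$ and the corollary follows.
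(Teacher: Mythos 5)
Your proposal is correct and follows essentially the same route as the paper, which obtains the corollary directly by combining Theorem \ref{OPSStop} (optimality of $\tau_D$) with Theorem \ref{mainthm} (uniqueness of $\{b_\iota\}_{\iota\in M}$ in the admissible class), the layerwise identification $D_\iota=\{x_2\geq b_\iota(x_1)\}$ being immediate from the monotonicity and continuity of $V(\iota,\cdot)$ as you note.
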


We conclude this section by offering a graphical illustration of Corollary \ref{cor51} in Figure 1 below. In this figure, the continuation region and stopping region have two layers and the blue line represents the optimal stopping boundary. The red line represents a possible path of $(\a,X)$ which, at a given time, jumps from the second layer to the first layer. The optimal stopping time $\tau_*$ is the first entry time of $(\a,X)$ into the region $D$ at some point on the optimal stopping boundary in the first layer. 


\begin{figure}[H]
	\centering
	\includegraphics[width=0.7\textwidth]{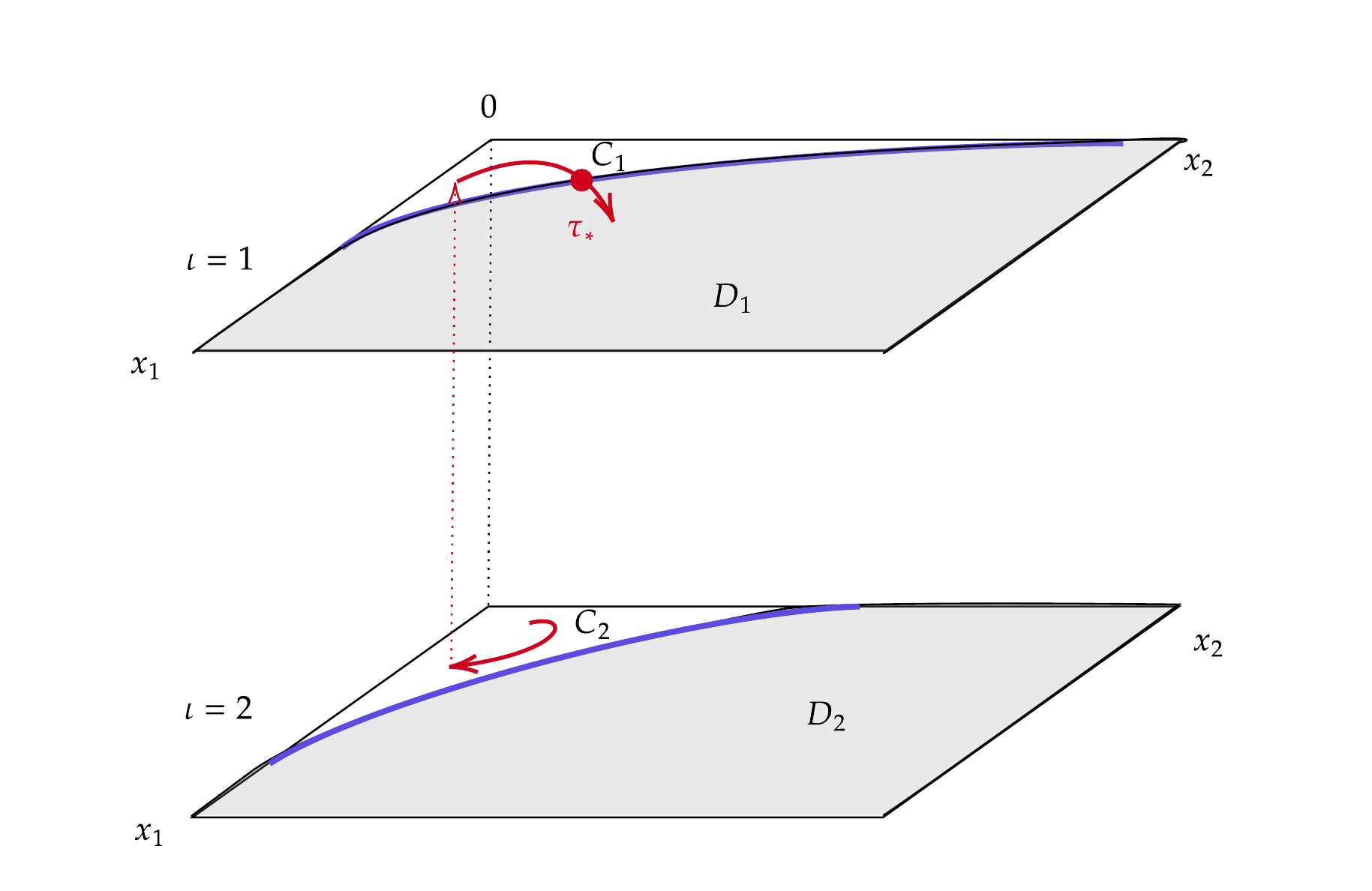}
	\caption{The optimal stopping time $\tau_*$ is the first entry time of $(\a,X)$ into $D$ at some point on the optimal stopping boundary in the first layer.} 
\end{figure}

\section{Quickest real-time detection of a Markovian drift}\label{sec:exp}

In this section, we apply this paper's results to a quickest detection problem of a Brownian coordinate drift that was solved in the one-dimensional case in \cite{GS} and solved in the multidimensional case in \cite{Er2020}, but now with random switching environments incorporated. 
Let $\a=\{\a_t:t\geq0\}$ be a continuous time Markov chain with finite state space $M=\{1,\!\cdots\!,n\}$ and generator $Q=(q_{\iota\jmath})_{n\times n}$, i.e. for all $\iota\neq\jmath$, $\sum_{\jmath\in M}q_{\iota\jmath}=0$ and  $ q_{\iota\jmath}\geq0$. As in \cite{Er2020,GS}, we consider a Bayesian formulation of the quickest detection problem. That is, we assume that one observes a sample path of the standard
two-dimensional Brownian motion $X=(X^1,X^2)$ with zero
drift initially, and then at some random and unobservable time $\theta>0$
taking value $0$ with probability $\pi \in [0,1)$ and being
exponentially distributed with parameter $\lambda>0$, one of the coordinate  processes $X$ obtains a
(known) non-zero drift $\mu$ permanently 
depending on the Markov chain.
The aim is to detect the time $\theta$  as `accurately' (to be specified below) as possible.

Based on the above formulation,  the observed process $X=(X^1,X^2)$ solves the following stochastic differential equations
\bel{SDE-e}
dX^i_t=\mu(\alpha_t)I(t\geq\theta,\beta=i)dt+dB^i_t\text{ for }i=1,2,
\eel
%
where $\beta$ denotes the number of the coordinate process which obtains the Markovian drift. We suppose that the prior distribution of $\beta$ is given with $\PP(\beta=i)=p_i\geq 0$ with $p_1+p_2=1$. Further, the unobservable time $\theta$, the random variable $\beta$, and the driving Brownian motion $B$ are all assumed to be independent.

%
%
%
%
%
%

Being based upon the continuous observation of $(\a,X)$, the problem is to find a
stopping time $\tau_*$ of $(\a,X)$, i.e. a stopping time with respect to the natural filtration $\cF^{\a,X}_t=\sigma(\a_s,X_s: 0\leq s\leq t)$ augmented with all $\PP$-null sets that is `as close as possible' to the unknown time $\th$. We formalize the phrase `as close as possible' by introducing the following cost functional for the stopping time $\tau$
\bel{cost-e} J(\tau)=\PP(\tau<\theta)+c\EE\[ F(\tau-\theta)I(\tau>\theta)\],\eel
where $F(t):=e^{\gamma t}-1$  for a given $\gamma>0$.  In the above equation, the first term corresponds to the probability of  {\it false alarm } and the second term corresponds to  {\it  expected exponentially penalized detection delay}. Therefore, the value function of our quickest detection problem is equivalent to the minimization problem
\bel{value-e} V=\inf_{\tau}J(\tau),\eel
where the infimum is taken for all (bounded) stopping times of $(\a,X)$.
%
\subsection{Measure change}
In this section, we will reconstruct the stochastic process in \eqref{SDE-e} and the minimization problem in \eqref{value-e} on  a new probability measure space where the quickest detection problem \eqref{value-e} can be reformulated as an optimal stopping problem through a suitable change of measure.

We begin by considering a probability measure space $(\Omega,\cF,\PP^0)$ supporting a two-dimensional Brownian motion $X$. This space shall also support a Markov Chain $\alpha$ with transition matrix $Q$ as given above, a random variable $\beta$ with $\PP^0(\beta=i)=p_i$, and a random variable $\th$ with $\PP^0(\th=0)=\pi$ and $\PP^0(\th>t)=(1-\pi)e^{-\l t}$ for $t>0$. Further, $X$, $\a$, $\beta$ and $\th$ are all assumed independent under $\PP^0$. Let the natural filtration of $(\a,X)$ be $\dbF$ and its augmentation by $\sigma(\th,\beta)$ 
be $\dbG$ (i.e. $\dbG=\{\cG_t\}_{t\geq 0}$ where $\cG_t=\sigma(\th,\beta,\{\a_s,X_s:s\leq t\})$. Our first task is to construct a new probability measure $\PP$ such that  the probability measure of $(\a,X)$ under $\PP$ coincides with the solution of \eqref{SDE-e}.

We now proceed towards this end. Let \bel{Zmeasure-e}Z_t:=\exp\Big\{\sum_{i=1}^2\(\int_0^t\mu(\alpha_t)I(s\geq\theta,\beta=i)dX^i_s-\int_0^t\mu^2(\alpha_t)I(s\geq\theta,\beta=i)ds\)\Big\}.\eel
We now define a new probability measure $\PP$ on $(\Omega,\cF)$ such that, for every $t\geq 0$,
$$\frac{d\PP}{d\PP^0}\Big|_{\cG_t}=Z_t,$$
and where $\cF=\sigma(\cG_t:t\geq 0)$. By  Girsanov's theorem, under the measure $\PP$,
$$X_t-\vec\mu(t;\a,\beta,\th)$$
is a two-dimensional standard Brownian motion,
where 
$$\vec\mu(t;\a,\beta,\th):=\begin{pmatrix}
\ds\int_0^t\mu(\a_s)I(s\geq\theta,\beta=1)ds\\\ns\ds\int_0^t\mu(\a_s)I(s\geq\theta,\beta=2)ds
\end{pmatrix}.$$
Since $Z_0=1$, $\PP$ and $\PP^0$ coincide on $\cG_0=\sigma(\beta,\th)$, i.e. the distributions of $\th$ and $\beta$ under $\PP^0$ and $\PP$ are the same. 
For the new measure $\PP$, given the initial value $\a_0=\iota$ and the constant $\pi$, the cost function $J$ can be represented as
\bel{JP0-e}\ba{ll}J(\t;\iota,\pi)\ad=\PP^0(\tau<\th)+c\EE^0\[ F(\tau-\theta)I(\tau>\theta)\]\\
\ns\ad=\sum_{i=1}^2p_i\(\PP^0_i(\tau<\th)+c\EE^0_i\big[ F(\tau-\theta)I(\tau>\theta)\big]\),\ea\eel
where  
$\PP_i$ is the probability measure under $\PP$ given $\beta=i$. We now reformulate the problem using  the measure  $\PP^0$.

To tackle \eqref{JP0-e}, we consider the posterior probability distribution process $\varPi_t=(\varPi^1_t,\varPi^2_t)$ and the corresponding  weighted likelihood ratio process $(\varPhi,\varPsi)$ given the data $(\a,X)$ observed up until time $t$, where the aforementioned quantities are defined as
\bel{Posterior}\left\{\ba{ll}\ad\varPi^i_t:=\PP^0_i(\theta\leq t|\cF_t^{\a,X}),\text{ for }i=1,2,\\
\ns\ad\varPhi_t:=\frac{\EE^0_1\big[e^{\gamma(t-\theta)}I(\theta\leq t)\big|\cF_t^{\a,X}\big]}{1-\varPi^1_t},\\
\ns\ad\varPsi_t:=\frac{\EE^0_2\big[e^{\gamma(t-\theta)}I(\theta\leq t)\big|\cF_t^{\a,X}\big]}{1-\varPi^2_t}\ea\right.\eel
where $\cF^{\a,X}$ is the natural filtration of $(\a,X)$ augmented with all $\PP^0$-null sets.
By Girsanov's theorem, 
$$\frac{d \PP_i}{d\PP_i^0}\Big|_{\cG_t}=Z^i_t,$$
where $(Z^1,Z^2)$ is defined by
\bel{lrprocess}Z^i_t:=\exp\Big\{\int_0^t\mu(\a_s)I(s\geq\th)dX^i_s-\frac12\int_0^t\mu^2(\a_s)I(s\geq\th)ds\Big\}
.\eel

We now wish to use $(\varPhi,\varPsi)$ to represent the cost functional $J$  under the measure $\PP$. 
Under the measure $\PP$, for $\tau<\th$,  $Z^i_\th=1$ almost surely.  It then follows that
\bel{cost-1-part}\PP^0_i(\tau<\theta)=\EE_i^0(Z^i_\theta I(\tau<\th))=\PP_i(\tau<\theta)=1-\pi-(1-\pi)\EE_i\int_0^\tau \lambda e^{-\lambda t}dt.\eel
Using the formula for conditional probability, we obtain
$$\ba{ll}\ad1-\varPi^i_t=\PP^0_i(\theta> t|\cF_t^{\a,X})=\frac{\EE_i [Z^i_tI(t<\theta)|\cF_t^{\a,X}]}{\EE_i [Z^i_t|\cF_t^{\a,X}]}=\frac{(1-\pi)e^{-\l t}}{\EE_i [Z^i_t|\cF_t^{\a,X}]},\ea$$
where in the second equality we have employed the assumption that $X$, $\a$, $\beta$ and $\th$ are independent under $\PP^0$ and recalled that $Z_t^i=1$ almost surely on $\{t\leq \th\}$. We continue by calculating
\bel{varPhi0}\ba{ll}\varPhi_t\ad=\frac{\EE^0_1\big[e^{\gamma(t-\theta)}I(\theta\leq t)\big|\cF_t^{\a,X}\big]}{1-\varPi^1_t}\\
\ns\ad=\frac{\EE_1\big[Z^1_te^{\gamma(t-\theta)}I(\theta\leq t)\big|\cF_t^{\a,X}\big]}{(1-\varPi^1_t)\EE_1 [Z^1_t|\cF_t^{\a,X}]}=\frac{\EE_1\big[Z^1_te^{\gamma(t-\theta)}I(\theta\leq t)\big|\cF_t^{\a,X}\big]}{(1-\pi)e^{-\lambda t}}.\ea\eel
Further, 
\bel{cost-2-part}\ba{ll}\ad\EE^0_1\[ F(\tau-\theta)I(\tau>\theta)\]=\g\EE^0_1\[I(\tau>\theta)\int_{\theta}^\t e^{\gamma (s-\theta)}ds\]\\
\ns\ad=\g\EE^0_1\[\int_{0}^\infty I(t\geq\theta) I(t<\tau)e^{\gamma (s-\theta)}dt\]\\
\ns\ad=\g\EE_1\[\int_{0}^\infty Z^1_tI(t\geq\theta) I(t<\tau)e^{\gamma (t-\theta)}dt\]\\
\ns\ad= \g\EE_1 \left\{\int_{0}^\infty I(t<\tau)\EE_1 \[Z^1_tI(t\geq\theta) e^{\gamma (t-\theta)}\Big|\cF_t^{\a,X}\]dt\right\}\\
\ns\ad=(1-\pi)\g\EE_1 \int_{0}^\t e^{-\lambda t}\varPhi_tdt.\ea\eel
Similarly, we may conclude that 
\bel{cost-3-part}\ba{ll}\ad\EE^0_2\[ F(\tau-\theta)I(\tau>\theta)\]=(1-\pi)\g\EE_2 \int_{0}^\t e^{-\lambda t}\varPsi_tdt.\ea\eel
Together with \eqref{cost-1-part}, \eqref{cost-2-part} and  \eqref{cost-3-part}, \eqref{cost-e} implies that 
\begin{equation}\label{newrepresentation}
J(\t;\iota,\pi)=1-\pi+c\gamma(1-\pi)\EE_{\iota,\frac\pi{1-\pi},\frac\pi{1-\pi}} \int_{0}^\t e^{-\lambda t}\(p_1\varPhi_t+p_2\varPsi_t-\frac\lambda{c\gamma }\)dt,
\end{equation}
where subscript under $\EE$ denotes the initial data of $(\a,\varPhi,\varPsi)$.\\
%
\indent We now derive the stochastic differential equation for $(\varPhi,\varPsi)$ under the measure $\PP$. We write the likelihood ratio process as
\bel{Lprocess}L^i_t:=\exp\Big\{\int_0^t\mu(\a_s)dX^i_s-\frac12\int_0^t\mu^2(\a_s)ds\Big\}
.\eel
It can now be easily seen  from \eqref{lrprocess} that $$Z_t^i=I(t< \th)+\frac{L^i_t}{L^i_\th}I(t\geq \th).$$
Invoking the assumption that $X$, $\a$, $\th$ and $\beta$ are independent under $\PP^0$,  we see from \eqref{varPhi0} that
\bel{varPhi}\varPhi_t=\frac{e^{\lambda t}}{(1-\pi)}\EE_1\big[Z^1_te^{\gamma(t-\theta)}I(\theta\leq t)\big|\cF_t^{\a,X}\big]=e^{(\lambda+\gamma)t}L^1_t\(\frac{\pi}{1-\pi}+\int_0^t\frac{\l e^{-(\lambda+\gamma)s}}{L^1_s}ds\).\eel
Similarly, we have that
\bel{varPsi}\varPsi_t=e^{(\lambda+\gamma)t}L^2_t\(\frac{\pi}{1-\pi}+\int_0^t\frac{\l e^{-(\lambda+\gamma)s}}{L^2_s}ds\).\eel
Under the measure $\PP$,  $\hat B_t=X_t$ is standard Brownian motion and
$dL^i_t=\mu(\alpha_t)L^i_tdX^i_t$. Therefore, under the measure $\PP$,  the couple $(\varPhi,\varPsi)$ 
solves the following system of stochastic differential equations with  Markov switching
\bel{sdephipsi}\left\{\ba{ll}\ad d\varPhi_t=[\lambda+(\lambda+\gamma)\varPhi_t]dt+\mu(\alpha_t)\varPhi_td\hat B^1_t\\
\ns\ad d\varPsi_t=[\lambda+(\lambda+\gamma)\varPsi_t]dt+\mu(\alpha_t)\varPsi_td \hat B^2_t.\ea\right.\eel
 From \eqref{lrprocess}, \eqref{varPhi}, and \eqref{varPsi}, $(\a,\varPhi,\varPsi)$ is observable in real time, and these are the sufficient statistics for our quickest detection problem.

We now offer an equivalent formulation of our quickest detection problem as the following optimal stopping problem
\bel{hatV}\hat V(\iota,\varphi,\psi)=\inf_\t\EE_{\iota,\varphi,\psi} \int_{0}^\t e^{-\lambda t}\(p_1\varPhi_t+p_2\varPsi_t-\frac\lambda{c\gamma }\)dt,\eel
where the infimum is taken over for all stopping times of $(\a,\varPhi,\varPsi)$ solving the system of equations in \eqref{sdephipsi}. The value function may then be represented as \bel{value-2} V(\iota,\pi)=(1-\pi)\parens{1+c\gamma\hat V\parens{\iota,\frac{\pi}{1-\pi},\frac{\pi}{1-\pi}}}.\eel

In order to apply the results in the present paper to this problem, we need only verify that Assumptions \ref{A1}, \ref{A2} and \ref{A3} from Section \ref{sec:pre} hold for the optimal stopping problem in \eqref{hatV}.
Indeed, in the present problem, we have that $a^i(\iota)=\l$, $b^{ii}(\iota)=\l+\g$, $b^{ij}(\iota)=0$ if $i\neq j$, $\sigma^i(\iota)=\mu(\iota)$, $\lambda(\iota)=\lambda$ and  $$H(\iota,\varphi,\psi)=p_1\varphi+p_2\psi-\frac\l{c\g}.$$
This verifies Assumptions \ref{A1}, \ref{A2}, and \ref{A3}.
%
Combining Theorem \ref{OPSStop} and Theorem \ref{mainthm},  we disclose in Corollary \ref{cor61} below the key result for the real-time quickest detection problem with switching states.

\begin{corollary} \label{cor61}
	Given $\pi\in[0,1)$, $\lambda,\g,\mu(\cdot)>0$ and  $p_1,p_2\geq0$ with $p_1+p_2
	\!=\!1$, the  quickest detection problem given in \eqref{value-e} admits the following representation
	\bea V(\iota,\pi)=(1-\pi)\[1+c\g\hat V\Big(\iota,\frac \pi{1-\pi},\frac \pi{1-\pi}\Big)\],\eea 
	where $\hat V$ is given by \eqref{hatV} above. The optimal stopping time is given by
	\bea\ba{ll}\ds\tau^*\ad=\inf\Big\{t\geq 0: \varPsi_t \geq b_{\a_t}(\varPhi_t)\Big\},\ea\eea
	where   the sufficient statistics $(\a,\varPhi,\varPsi)$ are given by
	$$\left\{\ba{ll}\ad L^i_t=\exp\Big\{\int_0^t\mu(\a_s)dX^i_s-\frac12\int_0^t\mu^2(\a_s)ds\Big\}\text{ for }i=1,2;\\\ns\ad\varPhi_t=e^{(\lambda+\gamma)t}L^1_t \(\frac\pi{1-\pi}+\lambda\int_0^t\frac{1}{e^{(\lambda+\gamma)s}L^1_s}ds\);\\
	\ns\ad\varPsi_t=e^{(\lambda+\gamma)t}L^2_t \(\frac\pi{1-\pi}+\lambda\int_0^t\frac{1}{e^{(\lambda+\gamma)s}L^2_s}ds\),\ea\right.$$
	%
	where $\a$ is a Markov chain with initial state $\a_0=\iota$ and $\{b_\iota\}_{\iota\in M}$ is the unique solution to \eqref{bouneq} in the admissible class given in Theorem \ref{mainthm}.
\end{corollary}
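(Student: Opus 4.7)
The plan is to reduce the quickest detection problem \eqref{value-e} to the three-variable switching stopping problem \eqref{hatV} and then invoke Theorem \ref{OPSStop} and Theorem \ref{mainthm} directly. All the analytical work has been done in Section \ref{sec:exp} up to the display \eqref{newrepresentation}; what remains is to take an infimum, verify the three standing assumptions for \eqref{sdephipsi}, and translate the abstract stopping region back to the observed process.

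First, observe that $(\alpha,\varPhi,\varPsi)$ generates the same (augmented) filtration as $(\alpha,X)$ because \eqref{Lprocess}--\eqref{varPsi} express $(\varPhi,\varPsi)$ as explicit pathwise functionals of $(\alpha,X)$ and, conversely, \eqref{sdephipsi} recovers $\hat B=X$ from $(\alpha,\varPhi,\varPsi)$. Hence $\cT_{\a,X}=\cT_{\a,\varPhi,\varPsi}$, and taking the infimum in \eqref{newrepresentation} yields
\[
V(\iota,\pi)=(1-\pi)\Big[1+c\gamma\,\hat V\Big(\iota,\tfrac{\pi}{1-\pi},\tfrac{\pi}{1-\pi}\Big)\Big],
\]
which is the first claim of the corollary. (For $\pi=0$, the initial datum $(\varPhi_0,\varPsi_0)=(0,0)$ is handled by taking $\pi\to 0^+$ and using continuity of $\hat V$; alternatively, one works on $(0,\infty)^2$ from the start by setting $\pi>0$.)

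Next, I would verify Assumptions \ref{A1}--\ref{A3} for \eqref{sdephipsi} and for the linear running cost $H(\iota,\varphi,\psi)=p_1\varphi+p_2\psi-\lambda/(c\gamma)$, exactly as sketched in the paragraph preceding the corollary: one has $a^i(\iota)=\lambda>0$, $b^{ii}(\iota)=\lambda+\gamma>0$, $b^{ij}(\iota)=0$ for $i\neq j$, $\sigma^i(\iota)=\mu(\iota)\neq 0$, and $\lambda(\iota)=\lambda>0$ (Assumption \ref{A1}); $H$ is linear, hence concave and locally Lipschitz, strictly increasing in each coordinate, and unbounded along each axis (Assumption \ref{A2}); and Assumption \ref{A3} follows from part (2) of Proposition \ref{veriA3}. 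With these in force, Theorem \ref{OPSStop} asserts that the first entry time $\tau_D$ of the stopping region $D=\{(\iota,\varphi,\psi):\hat V(\iota,\varphi,\psi)=0\}$ is optimal for \eqref{hatV}, and Proposition \ref{region} shows that each layer has the shape $D_\iota=\{(\varphi,\psi)\in(0,\infty)^2:\psi\geq b_\iota(\varphi)\}$ with $b_\iota$ a decreasing, convex, continuous function on $(0,\infty)$. Rewriting $\tau_D$ as a first entry time of the observed process gives the announced form $\tau^*=\inf\{t\geq 0:\varPsi_t\geq b_{\alpha_t}(\varPhi_t)\}$, and Theorem \ref{mainthm} identifies $\{b_\iota\}_{\iota\in M}$ as the unique solution in the admissible class of the nonlinear integral equation \eqref{bouneq}. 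The closed-form expressions for $L^i$, $\varPhi$, $\varPsi$ are a verbatim restatement of \eqref{Lprocess}, \eqref{varPhi}, \eqref{varPsi}.

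The only subtle point, and hence the main (mild) obstacle, is the bookkeeping involved in equating the two filtrations and confirming that $\tau^*$ is genuinely a stopping time of $\cF^{\a,X}$ rather than of the enlarged filtration $\dbG$; this is resolved by the explicit pathwise formulas \eqref{Lprocess}--\eqref{varPsi}, which make $(\varPhi,\varPsi)$ manifestly $\cF^{\a,X}$-adapted. Beyond this, the corollary follows by direct application of the two main theorems to the data of Section \ref{sec:exp}.
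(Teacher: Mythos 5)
Your proposal is correct and follows essentially the same route as the paper: the reduction to \eqref{hatV} via the measure-change computations culminating in \eqref{newrepresentation} and \eqref{value-2}, verification of Assumptions \ref{A1}--\ref{A3} with the same coefficient identifications, and a direct appeal to Theorem \ref{OPSStop} and Theorem \ref{mainthm} (with Proposition \ref{region} giving the boundary shape). Your extra remark on the equality of the filtrations of $(\a,X)$ and $(\a,\varPhi,\varPsi)$ is a welcome explicit justification of what the paper treats implicitly via the sufficient-statistics observation.
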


\noindent \textbf{Acknowledgements} The first-named author is grateful to ARO-YIP-71636-MA, NSF DMS-1811936,  ONR
N00014-18-1-2192, and ONR N00014-21-1-2672 for their support of this research.


\begin{center}
	
\end{center}

\end{document}